\numberwithin{equation}{section}
\newtheorem{thm}{Theorem}
\newtheorem{lem}[thm]{Lemma}
\newtheorem{prop}[thm]{Proposition}
\theoremstyle{definition}
\newtheorem{exm}[thm]{Example}
\newtheorem{rem}[thm]{Remark}
\newcommand{\bbS}{\mathbb{S}}
\newcommand{\rmi}{\mathrm{i}}
\newcommand{\CC}{\mathbb{C}}
\newcommand{\RR}{\mathbb{R}}
\newcommand{\TTT}{\mathbb{T}}
\newcommand{\NN}{\mathbb{N}}
\newcommand{\ZZ}{\mathbb{Z}}
\newcommand{\cK}{\mathcal{K}}
\newcommand{\cN}{\mathcal{N}}
\newcommand{\cU}{\mathcal{U}}
\newcommand{\cV}{\mathcal{V}}
\newcommand{\cP}{\mathcal{P}}
\newcommand{\dd}{\mathrm{d}}
\DeclareMathOperator{\dist}{dist}
\DeclareMathOperator{\dom}{dom}
\title[]{Discrete spectrum of interactions\\ concentrated near conical surfaces}
\author{Thomas Ourmi\`eres-Bonafos}
\address{Laboratoire de Math\'ematiques d'Orsay, Univ.~Paris-Sud, CNRS, Universit\'e Paris-Saclay, 91405 Orsay, France}
\email{thomas.ourmieres-bonafos@math.u-psud.fr}
\urladdr{http://www.math.u-psud.fr/~ourmieres-bonafos/}
\author{Konstantin Pankrashkin} 
\address{Laboratoire de Math\'ematiques d'Orsay, Univ.~Paris-Sud, CNRS, Universit\'e Paris-Saclay, 91405 Orsay, France}
\email{konstantin.pankrashkin@math.u-psud.fr}
\urladdr{http://www.math.u-psud.fr/~pankrash/}
\begin{document}

\keywords{Schr\"odinger operator, layers, $\delta$-interaction, existence of bound states, eigenvalue counting function, conical surfaces}

\begin{abstract}
We study the spectrum of two kinds of operators involving a conical geometry:
the Dirichlet Laplacian in conical layers and Schr\"odinger operators with attractive $\delta$-interactions
supported by infinite cones.
Under the assumption that the cones have smooth cross-sections,
we prove that such operators have infinitely many eigenvalues accumulating below the threshold of the essential spectrum
and we express the accumulation rate in terms of the eigenvalues of an auxiliary
one-dimensional operator with a curvature-induced potential.
\end{abstract}                                                            

\maketitle

\section{Introduction}

\subsection{Problem setting and main results}
The study of Laplace-type operators in infinite regions attract a lot of attention
due to their importance in quantum physics. A particular attention is paid to geometrically
induced spectral properties with an important focus on the existence of eigenvalues. Such properties were studied for specific systems such as locally deformed tubes \cite{de95,ei01,es,GJ92} and more recently,
layers \cite{cek04,dek01}, for which it is known that suitably localized deformations
of straight tubes and layers may only produce finitely many eigenvalues.
The situation changes for ``long-range'' deformations even in very simple geometries:
as found in \cite{et}, the Dirichlet Laplacian in a circular conical layer has an infinite
discrete spectrum accumulating to the threshold of the essential spectrum.
The result was then improved in \cite{dobr} by calculating the precise accumulation
rate and similar effects were found for Schr\"odinger operators with $\delta$-interactions supported
by circular cones in \cite{bel,lob}. The previous papers used in an essential way the presence
of the rotational symmetry and the aim of the present work is to extend the study to conical layers
and conical surfaces with arbitrary smooth cross-sections. We will show that the associated
operators always have an infinite discrete spectrum and compute the accumulation rate of these eigenvalues
in terms of a one-dimensional operator acting on the cross-section.

Let us introduce the mathematical framework. By a \emph{conical surface} in $\RR^3$ we mean a Lipschitz hypersurface $S\subset \RR^3$
invariant under the dilations, i.e. $\lambda S=S$ for all $\lambda>0$. A conical surface $S$
is uniquely determined by its \emph{cross-section} $\gamma:=S\cap \bbS^2$, where $\bbS^2$
is the unit sphere in $\RR^3$ centered at the origin. If $\gamma$ is a $C^4$ smooth loop,
we say that $S$ has a smooth cross-section.

Pick a conical surface $S$ with a smooth cross-section $\gamma$ for the rest of the paper.
We are interested in the spectral properties
of two Laplace-type operators associated with $S$.
The first one, denoted $A_{S,d}$, $d>0$, is the Dirichlet Laplacian in the unbounded domain
\[
\Lambda_{S,d}:=\big\{ x\in\RR^3: \dist(x,S)<\tfrac{1}{2}\, d\big\}
\]
called the \emph{conical layer of width $d$} around $S$. The operator $A_{S,d}$ is rigorously defined
as the unique self-adjoint operator in $L^2(\Lambda_{S,d})$
generated by the quadratic form
\[F
a_{S,d}(u)=\iiint_{\Lambda_{S,d}} |\nabla u|^2\dd x, \quad u\in H^1_0(\Lambda_{S,d}),
\]
and it can be interpreted as a model of a quantum particle confined in a layer with a hard-wall boundary.
The second one, denoted $B_{S,\alpha}$, is the self-adjoint operator
in $L^2(\RR^3)$ generated by the quadratic form
\[
b_{S,\alpha}(u)=\iiint_{\RR^3}|\nabla u|^2 \dd x-\alpha \iint_S |u|^2\dd\sigma, \quad u\in H^1(\RR^3),
\]
where $\alpha>0$ is a constant and $\sigma$ is the two-dimensional Hausdorff measure on $S$.
Informally, the operator $B_{S,\alpha}$ acts as the distributional Laplacian on $\RR^3\setminus S$
on the functions $u$ satisfying $[\partial u]+\alpha u=0$ on $S$, where $[\partial u]$
is the jump of the normal derivative, and it can be interpreted as
a Schr\"odinger operator with an attractive $\delta$-potential of strength $\alpha$ keeping
a particle in a vicinity of the surface $S$, see e.g. \cite{AGHH}, \cite[Chapter 10]{ekbook}
and the review \cite{e08} for a detailed discussion.

One easily sees that, due to the invariance of $S$ with respect to the dilations,
the role of the parameters $d>0$ and $\alpha>0$ in the above definitions
is quite limited, as one has the unitary equivalences
$A_{S,d}\simeq d^{-2} A_{S,1}$ and $B_{S,\alpha}\simeq \alpha^2 B_{S,1}$. Hence, in what follows we set
\[
\Lambda_S:=\Lambda_{S,1},
\quad
a_S:=a_{S,1},
\quad
A_S:=A_{S,1},
\quad
b_S:=b_{S,1},
\quad
B_S:=B_{S,1}
\]
and study the normalized operators $A_S$ and $B_S$.

As already mentioned above, it seems that the case of conical geometries
was first considered in the paper~\cite{et} for the operator $A_S$.
For the particular case when $S$ is a circular cone it was shown that $A_S$ has infinitely many eigenvalues below the essential spectrum. The accumulation rate
of the eigenvalues was then computed in \cite{dobr}. As for the operator $B_S$,
it was first considered in \cite{bel}, in which it was shown that
if $S$ is a circular cone, then one has an infinite discrete spectrum.
The accumulation rate was then calculated in \cite{lob}.
The paper \cite{bp16} studied general conical surfaces and an
expression for the bottom of the essential spectrum of $B_S$ was obtained.
The paper \cite{el} contains first results on the discrete spectrum
of the operator $B_S$ for conical surfaces $S$ with arbitrary
smooth cross-sections, and the authors showed that there is at least
one eigenvalue below the essential spectrum. They also
posed an open question on whether or not
the discrete spectrum is always infinite. In the present paper,
in particular, we give an affirmative answer to this question.
Remark that the papers \cite{bpp,klob,p16} studied similar questions
for Robin Laplacians or Aharonov-Bohm operators on conical domains,
and the eigenvalue behavior appears to be quite different.

If the cross-section $\gamma$ is a great circle (i.e. a circle of maximal
radius $1$), then the surface $S$ is a plane and both
$A_S$ and $B_S$ admit a separation of variables: one has
$\sigma(A_S)=[\pi^2,+\infty)$ and $\sigma(B_S)=[-\tfrac 14 ,+\infty)$.
Therefore, in what follows we assume that
\begin{equation}
 \label{great}
\gamma \text{ is not a great circle (i.e. $S$ is not a plane).}
\end{equation}
Denote by $\ell>0$ the length of $\gamma$ and set
$\TTT = \RR/\ell\ZZ$. Furthermore, choose an arc-length parametrization of $\gamma$,
i.e. an injective $C^4$ function $\Gamma:\TTT\to \RR^3$ such that
$\Gamma(\TTT)=\gamma$ and $|\Gamma'|\equiv 1$ and set
\[
n:= \Gamma\times\Gamma'.
\]
Recall that the \emph{geodesic curvature}
$\kappa(s)$ of $\gamma$ at a point $\Gamma(s)$ is defined
through
\[
		n'(s) = \kappa(s)\Gamma'(s), \quad \text{i.e. } \kappa=(\Gamma\times \Gamma'')\cdot \Gamma',
\]
and the assumption \eqref{great} takes the form
\begin{equation}
 \label{great2}
\kappa\not\equiv 0.
\end{equation}
An important role will be played by the curvature-induced Schr\"odinger operator in $L^2(\TTT)$,
\begin{equation}
       \label{ksoper}
\cK_S=-\dfrac{\dd^2}{\dd s^2} -\dfrac{\kappa^2}{4}
\end{equation}
defined on $H^2(\TTT)$. The operator $\cK_S$ has compact resolvent, hence, its spectrum is a sequence
of eigenvalues $\lambda_j(\cK_S)$, $j\in\NN$, enumerated in the non-decreasing order and with multiplicities taken into account,
such that $\lim_{j\to+\infty} \lambda_j(\cK_S)=+\infty$.
In particular, the following quantity is well-defined:
\begin{equation}
      \label{eq-ks}
k_S:=\frac{1}{2\pi}\sum_{j\in\NN: \lambda_j(\cK_S)<0}\sqrt{-\lambda_j(\cK_S)}.
\end{equation}
The following assertion is almost obvious:
\begin{prop}\label{prop1}
Under Assumption \eqref{great} one has $k_S>0$.
\end{prop}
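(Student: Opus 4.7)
The plan is to show directly that the operator $\cK_S$ has at least one negative eigenvalue; once this is established, the sum defining $k_S$ contains at least one strictly positive term, and the conclusion $k_S>0$ follows immediately.

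To find a negative eigenvalue, I would use the variational (min-max) characterization of $\lambda_1(\cK_S)$ together with a well-chosen test function. The natural candidate is the normalized constant function $u_0 \equiv \ell^{-1/2}$ on $\TTT$, which belongs to $H^2(\TTT)$. Plugging $u_0$ into the quadratic form associated to $\cK_S$ gives
\[
\langle \cK_S u_0, u_0\rangle = \int_{\TTT} |u_0'|^2\,\dd s - \frac{1}{4}\int_{\TTT} \kappa^2 |u_0|^2\,\dd s = -\frac{1}{4\ell}\int_{\TTT}\kappa^2\,\dd s.
\]
Since $\Gamma$ is $C^4$, the function $\kappa$ is continuous on $\TTT$, and the assumption \eqref{great2} ensures $\kappa \not\equiv 0$, so $\int_{\TTT}\kappa^2\,\dd s > 0$. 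Hence $\langle \cK_S u_0, u_0\rangle < 0$, and the min-max principle yields $\lambda_1(\cK_S) < 0$.

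Consequently the index set $\{j\in\NN : \lambda_j(\cK_S) < 0\}$ contains at least $j=1$, so
\[
k_S \ge \frac{1}{2\pi}\sqrt{-\lambda_1(\cK_S)} > 0,
\]
which is the claim. I do not expect any real obstacle: the entire argument is a one-line variational test, the only ingredient being the continuity of $\kappa$ (to turn $\kappa\not\equiv 0$ into a strictly positive $L^2$-norm) and the fact that constants lie in the form domain $H^1(\TTT)$ of $\cK_S$.
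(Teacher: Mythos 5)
Your argument is correct and coincides with the paper's proof: both test the Rayleigh quotient of $\cK_S$ against the constant function to get $\lambda_1(\cK_S)\le -\tfrac{1}{4\ell}\int_{\TTT}\kappa^2\,\dd s<0$ via assumption \eqref{great2}, and then read off $k_S>0$ from the definition \eqref{eq-ks}.
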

\begin{proof} It is sufficient to show that $\lambda_1(\cK_S)<0$.
By the min-max principle one has
\[
\lambda_1(\cK_S)\le \dfrac{\langle 1,\cK_S 1\rangle_{L^2(\TTT)}}{\langle 1,1\rangle_{L^2(\TTT)}}=-\dfrac{1}{4\ell} \displaystyle\int_\TTT \kappa^2\dd s,
\]
and the right-hand side is strictly negative due to \eqref{great2}.
\end{proof}

The main results of the paper are presented in the following two theorems.
For a self-adjoint operator $T$, let $\sigma(T)$ and $\sigma_\mathrm{ess}(T)$
denote its spectrum and essential spectrum, respectively.
If $T$ is semibounded from below and $E\le\inf\sigma_\mathrm{ess}(T)$, then
$\cN_E(T)$ denotes the number of eigenvalues of $T$ in $(-\infty,E)$,
and the map $E\mapsto \cN_E(T)$ is called the eigenvalue counting function
of $T$.

\begin{thm}[Dirichlet Laplacian in a conical layer]\label{thm1}
There holds 
\begin{gather}
  \label{asy1ess}
\sigma_\mathrm{ess}(A_S)=[\pi^2,+\infty),\\
   \label{asy1}
\cN_{\pi^2-E}(A_S)=k_S |\ln E| +o(\ln E) \text{ as }  E\to 0^+.
\end{gather}
In particular, the operator $A_S$ has infinitely many eigenvalues in $(-\infty,\pi^2)$.
\end{thm}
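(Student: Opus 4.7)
The plan is to reduce the spectral analysis near the essential-spectrum threshold $\pi^2$ to a family of one-dimensional inverse-square Schr\"odinger problems on the cross-section, in the spirit of~\cite{dobr}, but now carried out for an arbitrary smooth cross-section.

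For~\eqref{asy1ess}, I would use the tubular coordinates $\Phi(s,r,t)=r\Gamma(s)+tn(s)$, which form a diffeomorphism onto $\Lambda_S\cap\{r>R_0\}$ for some $R_0>0$; the induced metric is diagonal with coefficients $\bigl((r+t\kappa(s))^2,1,1\bigr)$, so far from the apex $\Lambda_S$ is a small perturbation of a straight slab of width $1$ whose Dirichlet Laplacian has spectrum $[\pi^2,+\infty)$. A Weyl sequence supported at growing distance from the apex yields $[\pi^2,+\infty)\subset\sigma_\mathrm{ess}(A_S)$, and a Persson-type argument via Dirichlet--Neumann bracketing at $r=R\to+\infty$, together with the $O(1/r)$ decay of the curvature-induced corrections, gives the opposite inclusion.

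For~\eqref{asy1} I would bracket at some $r=R$ large to discard the apex region (which contributes only finitely many eigenvalues below $\pi^2$), work in the tubular coordinates on $\{r>R\}$ and conjugate the wave function by $w^{1/2}$, with $w(s,r,t):=r+t\kappa(s)$, to pass to the flat measure $ds\,dr\,dt$. Since the transverse Dirichlet Laplacian on $(-1/2,1/2)$ has ground state $\chi_0(t)=\sqrt{2}\cos(\pi t)$ at $\pi^2$ with a spectral gap $3\pi^2$, the analysis near the threshold reduces via the projection on $\chi_0$ to a two-dimensional operator on $L^2\bigl((R,+\infty)\times\TTT,dr\,ds\bigr)$. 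Expanding in $t$ and integrating by parts the cross terms produced by the conjugation, one finds that this effective operator, after subtracting $\pi^2$, is
\[
\cL_S^R=-\partial_r^2+\frac{\cK_S-1/4}{r^2}+O\!\left(\tfrac{1}{r^3}\right),
\]
with Dirichlet condition at $r=R$; here $\cK_S$ is exactly the operator introduced in~\eqref{ksoper}, the $-\kappa^2/4$ arising as the standard curvature-induced effective potential and the $-1/4$ as the byproduct of the conjugation that straightens the conical measure. Diagonalizing $\cK_S$ decouples $\cL_S^R$ into the half-line Schr\"odinger operators
\[
h_j^R=-\frac{d^2}{dr^2}+\frac{\lambda_j(\cK_S)-1/4}{r^2}\quad\text{on}\quad L^2\bigl((R,+\infty),dr\bigr),
\]
with Dirichlet condition at $r=R$. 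For $\lambda_j(\cK_S)<0$ the coefficient $\lambda_j(\cK_S)-1/4$ is $<-1/4$, i.e.\ the inverse-square potential is strictly supercritical, and the classical asymptotic
\[
\cN_{-E}(h_j^R)=\frac{1}{2\pi}\sqrt{-\lambda_j(\cK_S)}\,|\ln E|+O(1),\quad E\to 0^+,
\]
holds, while the remaining $\lambda_j(\cK_S)\ge0$ contribute at most $O(1)$. Summation over $j$ yields the leading term $k_S|\ln E|$, matching~\eqref{asy1}.

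The main obstacle is the error control needed to upgrade this heuristic to matching two-sided bounds. For an upper bound on $\cN_{\pi^2-E}(A_S)$ I would use Neumann bracketing at $r=R$ together with trial functions of product form $f(r,s)\chi_0(t)$, so that the $O(1/r^3)$ remainders act as subcritical potentials contributing only $O(1)$ eigenvalues. The matching lower bound via Dirichlet bracketing requires absorbing the couplings between $\chi_0$ and the higher transverse modes; the $3\pi^2$ spectral gap above $\pi^2$ allows one to do so by a Schur-complement argument in the transverse variable, since the off-diagonal entries have order $1/r^2$ and are uniformly bounded on $\{r>R\}$. All subleading contributions decay strictly faster than $1/r^2$, so they add only $O(1)$ to the counting function, and the two bounds agree to leading order, yielding~\eqref{asy1} and in particular the infiniteness of the discrete spectrum.
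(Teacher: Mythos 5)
Your outline shares the paper's skeleton — tubular coordinates $\Phi(r,s,t)=r\Gamma(s)+tn(s)$, Dirichlet--Neumann bracketing at $r=R$ to discard the apex, conjugation by $(r+t\kappa)^{1/2}$ to pass to the flat measure, and an eventual reduction to half-line inverse-square models analyzed via the Kirsch--Simon asymptotics of Proposition~\ref{th:KS88} — but it diverges at the step where one converts the conjugated form into a family of one-dimensional problems. The paper's device is purely comparison-theoretic: after the conjugation, the coefficients $1/(r+t\kappa)^2$ and $1/(r+t\kappa)^3$ are sandwiched between their values with $t\kappa$ replaced by $\pm\kappa_\infty/2$, and the discrepancy is absorbed into an $s$- and $t$-independent $B/\rho^3$ correction (Lemma~\ref{prop:bounds_lay}). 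After the affine change $\rho=r\mp\kappa_\infty/2$ the form $g_{R,D/N}$ is then \emph{exactly} separable in $(\rho,s,t)$: the operator is a direct sum over the eigenfunctions of $\cK_S$ and of the transverse Dirichlet Laplacian, and no projection onto $\chi_0$ or control of inter-mode coupling is needed. Only $n=1$ survives because $\pi^2 n^2\ge 4\pi^2$ for $n\ge 2$, and only finitely many $\cK_S$-modes are relevant because $\lambda_m(\cK_S)\to+\infty$. You propose instead to project onto the ground transverse mode $\chi_0$ and absorb the off-diagonal coupling to higher modes by a Schur-complement argument using the $3\pi^2$ gap. That is a viable alternative in principle, but it is technically more delicate, and your description reverses the roles of the two steps: restricting the form to trial functions of the type $f(r,s)\chi_0(t)$ gives a \emph{lower} bound on the counting function (not the upper bound as you write), while it is the \emph{upper} bound that requires the Schur complement or some other mechanism to show that the higher transverse modes add only $o(\ln E)$. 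You would also need to address that the $O(1/r^3)$ remainder is $s$-dependent (it involves $\kappa''(s)$ and $\kappa'(s)^2$), so it does not commute with $\cK_S$; before ``diagonalizing $\cK_S$'' one must first replace it by a uniform $s$-independent bound, exactly the step the paper performs. Finally, the stated $O(1)$ error in the per-mode asymptotics is stronger than what Proposition~\ref{th:KS88} delivers, which is only $o(\ln E)$; this does not affect the conclusion, but the bookkeeping should match the tool being invoked.
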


\begin{thm}[$\delta$-interaction on a conical surface]\label{thm2}
There holds 
\begin{gather}
  \label{asy2ess}
\sigma_\mathrm{ess}(B_S)=\big[-\tfrac{1}{4},+\infty\big),\\
   \label{asy2}
\cN_{-\frac{1}{4}-E}(B_S)=k_S |\ln E| +o(\ln E) \text{ as }  E\to 0^+.
\end{gather}
In particular, the operator $B_S$ has infinitely many eigenvalues in $\big(-\infty,-\frac 14\big)$.
\end{thm}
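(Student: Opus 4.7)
The plan is to adapt the strategy used for Theorem~\ref{thm1} to the $\delta$-interaction setting: first verify the essential spectrum, then reduce the counting function to an effective one-dimensional radial problem via semi-geodesic coordinates and Dirichlet--Neumann bracketing, and finally conclude with the classical asymptotics for Bessel-type operators. The identity \eqref{asy2ess} follows from~\cite{bp16}, which pins $\inf\sigma_\mathrm{ess}(B_S)=-\tfrac14$ under assumption~\eqref{great}; that the whole half-line $[-\tfrac14,+\infty)$ lies in $\sigma_\mathrm{ess}(B_S)$ follows from Weyl sequences concentrated far from the apex, where $S$ is locally close to a plane.

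For~\eqref{asy2} I would introduce the semi-geodesic coordinates $\Phi(r,s,t):=r\Gamma(s)+tn(s)$, $r>0$, $s\in\TTT$, $t\in\RR$, which map $(r_0,+\infty)\times\TTT\times(-\rho,\rho)$ diffeomorphically onto a tubular neighbourhood $U$ of $S$ outside the ball $B_{r_0}$ of radius $r_0$, provided $r_0$ is large compared with $\rho\|\kappa\|_\infty$. A direct computation yields the diagonal metric $g=\mathrm{diag}(1,(r+t\kappa(s))^2,1)$ and shows that $S\cap U=\{t=0\}$ carries the measure $r\,\dd r\,\dd s$. Applying Dirichlet--Neumann bracketing across $\partial U$ and $\partial B_{r_0}$ decouples $U$ both from the apex region (which contributes only $O(1)$ eigenvalues below $-\tfrac14$) and from the exterior, where the operator is a free Laplacian and thus supports no spectrum below $-\tfrac14$. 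Inside $U$, the unitary $u\mapsto v:=(r+t\kappa)^{1/2}u\circ\Phi$ turns $b_S$ into the quadratic form on $L^2((r_0,+\infty)\times\TTT\times(-\rho,\rho))$ associated with the operator
\[
L=-\partial_r^2-\partial_t^2+\frac{1}{(r+t\kappa)^2}\Bigl(-\partial_s^2-\tfrac{\kappa^2}{4}\Bigr)-\frac{1}{4(r+t\kappa)^2}+R,
\]
where $R=O(r^{-3})$ collects derivatives of $\kappa$, subject to a $\delta$-interaction of unit strength on $\{t=0\}$ and separated boundary conditions elsewhere.

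The next step is separation of variables. The one-dimensional transverse operator on $(-\rho,\rho)$ corresponding to $-\partial_t^2$ with a $\delta$-interaction of unit strength at the origin has a single negative eigenvalue tending to $-\tfrac14$ as $\rho\to+\infty$, with normalised ground state $\chi\propto e^{-|t|/2}$, and the rest of its spectrum lies above $0$. Projecting $L$ onto the product modes $\chi(t)\psi_j(s)$, where $\psi_j$ is the $j$-th eigenfunction of $\cK_S$, and invoking the min-max principle reduces $\cN_{-\frac14-E}(B_S)$, up to an error $o(|\ln E|)$, to
\[
\sum_{j\in\NN}\cN_{-E}(H_j),\qquad H_j:=-\dfrac{\dd^2}{\dd r^2}+\dfrac{\lambda_j(\cK_S)-\tfrac14}{r^2}\ \text{on}\ (r_0,+\infty).
\]
For $\lambda_j(\cK_S)\ge 0$ one has $\cN_{-E}(H_j)=O(1)$, while if $\lambda_j(\cK_S)<0$ the coefficient $\lambda_j(\cK_S)-\tfrac14<-\tfrac14$ lies in the sub-critical Bessel regime, and the classical Kirsch--Simon asymptotics deliver $\cN_{-E}(H_j)=\tfrac{1}{2\pi}\sqrt{-\lambda_j(\cK_S)}\,|\ln E|+O(1)$ as $E\to 0^+$. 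Summing over $j$, with Proposition~\ref{prop1} guaranteeing that the set of negative eigenvalues of $\cK_S$ is non-empty, produces the constant $k_S$ of~\eqref{eq-ks}.

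The principal difficulty is the rigorous matching of the upper and lower min-max bounds: one must verify that the cut-offs at $|t|=\rho$ and $r=r_0$, the curvature remainder $R$, and the coupling between distinct transverse modes each perturb $\cN_{-\frac14-E}(B_S)$ by only $o(|\ln E|)$. These estimates reduce to quantitative Birman--Schwinger-type bounds exploiting the fact that the logarithmic divergence is generated entirely by the $r\to+\infty$ tail of the $1/r^2$ coefficient, so that perturbations of order $o(1/r^2)$ remain invisible at leading order.
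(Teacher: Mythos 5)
Your overall plan---tubular coordinates near $S$, Dirichlet--Neumann bracketing, separation of variables, and the Kirsch--Simon asymptotics of Proposition~\ref{th:KS88}---is indeed the one the paper follows, and your unitary transform, the appearance of $-\partial_t^2$ with a unit $\delta$-interaction at $t=0$, and the effective radial coefficient $(\lambda_j(\cK_S)-\tfrac14)/r^2$ are all correct. There is, however, a genuine gap: you carry out the reduction on a \emph{fixed-width} transverse slab $t\in(-\rho,\rho)$. The transverse operator on this slab (i.e.\ $Q_{\rho,D}$ or $Q_{\rho,N}$ of Proposition~\ref{prop:mod1D}) has ground-state energy $-\tfrac14\pm\epsilon_\rho$ with $\epsilon_\rho>0$ a \emph{fixed} constant once $\rho$ is fixed. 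This is not a perturbation of order $o(1/r^2)$, as your closing paragraph suggests; it is an $r$-independent additive shift of the threshold. After separating variables, the radial counting is therefore at energy $-E\mp\epsilon_\rho$ rather than $-E$. On the Dirichlet side, $-E-\epsilon_\rho\to-\epsilon_\rho<0$ as $E\to0^+$, so the count saturates at a constant and yields only $\cN_{-\frac14-E}(B_S)\ge O(1)$, not the required $k_S|\ln E|+o(\ln E)$. On the Neumann side, $-E+\epsilon_\rho>0$ for small $E$, and since the radial operator $-\dd^2/\dd r^2+c/r^2$ on a half-line has essential spectrum $[0,+\infty)$, the bound degenerates to $\cN\le\infty$. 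So the fixed-$\rho$ truncation destroys the logarithmic asymptotics in both directions.

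The remedy, which is precisely what the paper implements and what your proposal is missing, is to let the transverse half-width tend to $+\infty$ jointly with $E\to0^+$, so that $\epsilon_\rho=o(E)$. For the lower bound (Section~\ref{sec:lb-delta}) the paper works on $\{|t|<\delta R\}$ with Dirichlet transverse conditions and then sets $R=R_\delta(E)=K_\delta|\ln E|$; by Proposition~\ref{prop:mod1D} one has $|\lambda_1(Q_{R\delta,D})+\tfrac14|\le C_0^{-1}E^{C_0K_\delta\delta}$, which is negligible for $K_\delta$ large. For the upper bound (Section~\ref{sec:ub-delta}) the transverse slab expands linearly, $\{|t|<\delta r\}$, and after a further partition in $r$ the outermost cell has transverse half-width $\delta(R+L)$ with $L=K|\ln E|$. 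In both cases the inner radius grows with $|\ln E|$, so after rescaling the radial variable the spectral parameter becomes some $\mu(E)\sim K^2E|\ln E|^2$, but $|\ln\mu(E)|/|\ln E|\to1$, which preserves the leading coefficient $k_S$. Without this $E$-dependent enlargement of the tube, neither the upper nor the lower estimate closes, so the core of the proof is missing.
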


\begin{exm}\label{circle}
If $S$ is a circular cone of opening angle $2\theta$, $\theta\in (0,\frac\pi 2)$,
then the cross-section is a circle of geodesic radius $\theta$
having the length $\ell=2\pi \sin\theta$ and the constant geodesic
curvature $\kappa=\cot\theta$. One easily computes
\begin{gather*}
\lambda_1(\cK_S)=-\dfrac{\kappa^2}{4}=-\dfrac{\cot^2\theta}{4},
\quad
\lambda_2(\cK_S)=\dfrac{4\pi^2}{\ell^2}-\dfrac{\kappa^2}{4}=\dfrac{4-\cos^2\theta}{4\sin^2\theta}>0,\\
k_S=\dfrac{1}{2\pi} \sqrt{-\lambda_1(\cK_S)}= \dfrac{\cot\theta}{4\pi}. 
\end{gather*}
Therefore, for this particular case, the result of Theorem~\ref{thm1} coincides with Theorem 1.4 in~\cite{dobr},
while Theorem~\ref{thm2} is exactly Theorem 1.4 in \cite{lob}. 
\end{exm}

One can use the above computation to improve the result of Proposition~\ref{prop1} as follows:

\begin{thm}\label{thm5}
For a conical surface $S$ with a smooth cross-section of length $\ell\le 2\pi$
there holds
\[
k_S\ge \dfrac{\sqrt{4\pi ^2-\ell^2}}{4\pi \ell},
\]
and the equality holds iff $S$ is a circular cone.
\end{thm}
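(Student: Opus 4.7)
My plan is to chain three inequalities whose equality cases all coincide with $S$ being a circular cone. First, since by \eqref{eq-ks} the quantity $k_S$ is a sum over the negative eigenvalues of $\cK_S$, retaining only the leading term gives
\[
k_S \ge \frac{1}{2\pi}\sqrt{-\lambda_1(\cK_S)}.
\]
Second, reusing the variational estimate from the proof of Proposition~\ref{prop1} (testing $\cK_S$ against the constant function $1$) yields $-\lambda_1(\cK_S) \ge \frac{1}{4\ell}\int_\TTT \kappa^2\,\dd s$.

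The heart of the proof is a geometric lower bound on $\int_\TTT \kappa^2\,\dd s$ in terms of $\ell$. I plan to exploit that $\Gamma:\TTT\to\bbS^2$ is arclength-parametrized on the unit sphere: differentiating $|\Gamma|^2\equiv 1$ twice and using $|\Gamma'|\equiv 1$ gives $\Gamma\cdot\Gamma''=-1$, so that $\Gamma''=-\Gamma+\kappa\, n$ and $|\Gamma''|^2=1+\kappa^2$. Since $\Gamma'$ has zero mean on $\TTT$ (being the derivative of a periodic function), the classical Wirtinger inequality applied componentwise yields
\[
\int_\TTT |\Gamma''|^2\,\dd s \ge \Bigl(\frac{2\pi}{\ell}\Bigr)^{\!2}\int_\TTT |\Gamma'|^2\,\dd s = \frac{4\pi^2}{\ell}.
\]
Substituting $|\Gamma''|^2=1+\kappa^2$ and rearranging give $\int_\TTT \kappa^2\,\dd s \ge (4\pi^2-\ell^2)/\ell$, which is meaningful precisely when $\ell\le 2\pi$. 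Chaining the three bounds produces the stated lower bound on $k_S$. This Wirtinger step is what I expect to be the main obstacle, since it is the only place where the sphere-valued constraint on $\Gamma$ has to be turned into a scalar estimate.

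For the equality case I plan to run the argument backwards. The binding step is Wirtinger: equality there forces $\Gamma(s)=c_0+a\cos(2\pi s/\ell)+b\sin(2\pi s/\ell)$, and imposing $|\Gamma'|\equiv 1$ pins down $a\perp b$ with $|a|=|b|=\ell/(2\pi)$, while $|\Gamma|\equiv 1$ forces $c_0$ orthogonal to $\mathrm{span}(a,b)$ with $|c_0|^2=1-\ell^2/(4\pi^2)$. Hence $\gamma$ is a geodesic circle on $\bbS^2$ and $S$ is a circular cone. Conversely, for a circular cone Example~\ref{circle} shows that $\kappa$ is constant (making the second bound sharp, since the constant function is then a $\lambda_1$-eigenfunction of $\cK_S$) and that only $\lambda_1(\cK_S)$ is negative (making the first bound sharp), with a direct substitution confirming that all three bounds are simultaneously attained.
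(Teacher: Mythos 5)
Your proof is correct, and the key step---the lower bound $\int_\TTT \kappa^2\,\dd s \ge (4\pi^2-\ell^2)/\ell$---is obtained by a genuinely different route from the paper's. The paper combines three classical facts: the spherical isoperimetric inequality $\ell^2 \ge A(4\pi-A)$ (R\'ado), the Gauss--Bonnet theorem $\int_\TTT \kappa\,\dd s = 2\pi-A$, and Cauchy--Schwarz, and then handles the equality case by noting that if $S$ is not a circular cone then $\kappa$ is non-constant, so the constant test function is not an eigenfunction of $\cK_S$ and \eqref{eq-kkk2} becomes strict. You instead exploit that $\Gamma$ is arc-length parametrized on $\bbS^2$ to get $|\Gamma''|^2 = 1+\kappa^2$ pointwise, then apply Wirtinger's inequality to the mean-zero periodic function $\Gamma'$. (Minor sign note: with the paper's convention $\kappa = (\Gamma\times\Gamma'')\cdot\Gamma'$, one actually has $\Gamma'' = -\Gamma - \kappa\,n$, but this is immaterial since only $|\Gamma''|^2$ enters.) Your equality analysis is also self-contained: Wirtinger equality explicitly forces $\Gamma$ to be a first-harmonic curve, which the constraints $|\Gamma|\equiv|\Gamma'|\equiv 1$ then pin down to a geodesic circle---this avoids the paper's implicit appeal to the fact that a closed curve of constant geodesic curvature on $\bbS^2$ is a geodesic circle. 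The Wirtinger route is more elementary in that it sidesteps isoperimetry and Gauss--Bonnet, while the paper's route is geometrically illuminating via the enclosed-area interpretation of $\int\kappa$.
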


\begin{proof}
Let $\gamma$ be the cross-section of $S$ and $A$ be the area of the spherical domain enclosed
by $\gamma$ such that the vector $n=\Gamma\times\Gamma'$ points to its exterior.
The classical isoperimetric inequality for spherical domains, see \cite{rado}, reads as
$\ell^2\ge A (4\pi -A)$ or, equivalently, $(2\pi -A)^2\ge 4\pi^2-\ell^2$.
Due to the Gauss-Bonnet theorem one has
\[
\int_\TTT \kappa \,\dd s=2\pi - A,
\]
and using the Cauchy-Schwarz inequality we obtain
\[
4\pi^2-\ell^2\le (2\pi - A)^2=\Big( \int_\TTT \kappa\,\dd s\Big)^2
\le \int_\TTT 1\,\dd s \cdot \int_\TTT \kappa^2 \dd s=\ell \int_\TTT \kappa^2 \dd s
\]
thus
\[
\int_\TTT \kappa^2\dd s\ge \dfrac{4\pi^2-\ell^2}{\ell}.
\]
As previously, by the min-max principle we have
\begin{equation}
  \label{eq-kkk2}
\lambda_1(\cK_S)\le 
\dfrac{\langle 1,\cK_S 1\rangle_{L^2(\TTT)}}{\langle 1,1\rangle_{L^2(\TTT)}}=-\dfrac{1}{4\ell} \int_\TTT \kappa^2 \dd s
\le -\dfrac{4\pi^2 - \ell^2}{4\ell^2}.
\end{equation}
As already seen in Example~\ref{circle}, for circular cones one has the equality in \eqref{eq-kkk2}.
Assume now that $S$ is not a circular cone, then $\kappa$ is non-constant and the test function $1$
is not an eigenfunction of $\cK_S$, hence, the inequality in \eqref{eq-kkk2} is strict,
and
\[
k_S\ge \dfrac{1}{2\pi}\sqrt{-\lambda_1(\cK_S)}> \dfrac{\sqrt{4\pi ^2-\ell^2}}{4\pi \ell}. \qedhere
\]
\end{proof}

\begin{rem}
The result of Theorem~\ref{thm5} can be viewed as a kind of isoperimetric inequality:
among the conical surfaces with smooth cross-sections of fixed length $l \le 2\pi$,
the circular cones give the highest rate for the accumulation of discrete eigenvalues
to the bottom of the essential spectrum for both $A_S$ and $B_S$.
Remark that the first eigenvalue of $B_S$ is also maximized by the circular cones, see~\cite{el}.
\end{rem}

\begin{rem}\label{rem6}
Contrary to the case of circular cones, the sum in the definition of $k_S$
can contain an arbitrary large number of summands.
Namely, let $\ell>0$ and $m\in\NN$. For small $\varepsilon>0$
one can construct a smooth loop $\gamma_\varepsilon$  on $\bbS^2$ having the length
$\ell$ and the following property: there exists $\Gamma:\TTT\to \RR^3$, an arc-length parametrization of $\gamma_\varepsilon$, such that
the finite pieces $\Gamma\Big(\big(\frac{j\ell}{m}-\varepsilon,\frac{j\ell}{m}+\varepsilon\big)\Big)$,
$j=1\dots,m$,
coincide with circular arcs of geodesic radius $\varepsilon$
and of length $2\varepsilon$. It follows that
$\kappa(s)=\cot\varepsilon$ for $s\in \big(\frac{j\ell}{m}-\varepsilon,\frac{j\ell}{m}+\varepsilon\big)$,
$j=1,\dots,m$. Furthermore, one can choose $m$ functions $\varphi_j\in C^\infty_c(\TTT)$, $j=1,\dots,m$, independent of $\varepsilon$,
such that
\[
\mathop{\mathrm{supp}} \varphi_j\subset \Big(\frac{j\ell }{m}-\frac{\ell}{2m},\frac{j\ell}{m}+\frac{\ell}{2m}\Big),
\quad
\varphi_j(s)=1 \text{ for }s\in \Big(\frac{j\ell}{m}-\varepsilon,\frac{j\ell}{m}+\varepsilon\Big),
\quad
\|\varphi_j\|_{L^2(\TTT)}=1,
\]
then for $u=\sum_{j=1}^m\alpha_j\varphi_j$, $\alpha=(\alpha_1,\dots,\alpha_m)\in\CC^{m}\setminus\big\{(0,\dots,0)\big\}$,
one has
\begin{multline*}
\dfrac{\langle u,\cK_S u\rangle_{L^2(\TTT)}}{\|u\|^2_{L^2(\TTT)}}
=\dfrac{\sum_{j=1}^m  |\alpha_j|^2\Big(\|\varphi'_j\|^2_{L^2(\TTT)}  - \tfrac{1}{4}\|\kappa \varphi_j\|^2_{L^2(\TTT)}\Big)}{\|\alpha\|_{\CC^m}^2}\\
\le\dfrac{\sum_{j=1}^m |\alpha_j|^2\|\varphi'_j\|^2_{L^2(\TTT)}}{\|\alpha\|_{\CC^m}^2}-\dfrac{\varepsilon\cot^2\varepsilon}{2}
\le C -\dfrac{\varepsilon\cot^2\varepsilon}{2}, \quad C:=\max\|\varphi'_j\|^2_{L^2(\TTT)}
\end{multline*}
By the min-max principle it follows that $\lambda_m(\cK_S)\le C -\varepsilon\cot^2\varepsilon/2<0$ as $\varepsilon$
is sufficiently small, and then there are at least $m$ summands in \eqref{eq-ks}. The example also shows that,
at a fixed length of the cross-section, there is no finite upper bound for $k_S$.
\end{rem}

\begin{rem}
It would be interesting to understand whether the results can be extended
to the case of a conical surface $S$ whose cross-section $\gamma$
is an \emph{open} smooth arc. As will be seen from the proofs,
a literal adaptation of our approach only gives a two-sided estimate,
\[
k^D_S |\ln E| +o(\ln E)\le \cN_E \le k^N_S |\ln E| +o(\ln E), \quad E\to 0^+,
\]
with $\cN_E$ standing for either $\cN_{\pi^2-E}(A_S)$
or $\cN_{-\frac14-E}(B_S)$ and
\[
k^{D/N}_S=\frac{1}{2\pi}\sum_{j\in\NN: \lambda_j(\cK^{D/N}_S)<0}\sqrt{-\lambda_j(\cK^{D/N}_S)},
\]
where $\cK^{D/N}_S$ is given by the same differential expression \eqref{ksoper}
but acts on the functions satisfying Dirichlet/Neumann boundary conditions
at the endpoints of $\gamma$. By analogy with the recent works
on Schr\"odinger operators with strong $\delta$-interactions \cite{dekp,ep}
we conjecture that the asymptotics of Theorems~\ref{thm1} and \ref{thm2}
still hold with $k_S^D$ instead of $k_S$.
\end{rem}

Both Theorem~\ref{thm1} and Theorem~\ref{thm2}
are proved by estimating the quadratic forms $a_S$
and $b_S$ using curvilinear coordinates in adapted
tubular neighborhoods of $S$. After suitable cut-offs,
we reduce the problem to the study of some
one-dimensional models for which the asymptotics
of the eigenvalue counting function is known (see Proposition~\ref{th:KS88} below).
The proof of Theorem~\ref{thm1} is given in Section~\ref{sec:spec_lay},
while the proof of Theorem~\ref{thm2}
is presented in Section \ref{sec:lb-delta} (the lower bound)
and Section~\ref{sec:ub-delta} (the upper bound and the essential spectrum).

\subsection{Preliminaries}
\label{sec:PI}

Let us list some conventions used throughout the text.
We denote $\NN=\{1,2,3,\dots\}$ and $\NN_0=\NN\mathop{\cup}\{0\}$.
For the geodesic curvature $\kappa$ defined on $\TTT$ one sets
\[
\kappa_\infty:=\|\kappa\|_\infty,
\quad
\kappa'_\infty:=\|\kappa'\|_\infty,
\quad
\kappa''_\infty:=\|\kappa''\|_\infty.
\]
Let $E\in\RR$. If $T$ is a self-adjoint operator, then we denote
by $\dom T$ its domain and by $\cN_E(T)$ the dimension
of the range of its spectral projector on $(-\infty,E)$.
If $T$ is lower semibounded and $E<\inf\sigma_\mathrm{ess}(T)$,
then $\cN_E(T)$ is exactly the number of eigenvalues of $T$ (counting the multiplicities)
in $(-\infty,E)$, otherwise one has $\cN_E(T)=+\infty$. Remark that
$\cN_E(T_1\oplus T_2)=\cN_E(T_1)+\cN_E(T_2)$ for any two self-adjoint operators
$T_1$ and $T_2$ and any $E\in\RR$.
By $\lambda_j(T)$, $j\in\NN$,
we denote the $j$-th eigenvalue of $T$ when enumerated in the non-decreasing
order and counted according to the multiplicities.
We recall that the function $E\mapsto \cN_E(T)$ is usually referred to
as the eigenvalue counting function for $T$.

If a self-adjoint operator $T$ is generated
by a closed lower semibounded
quadratic form $t$ defined on the domain $\dom t$,
then by definition $\cN_E(t):=\cN_E(T)$ and $\lambda_j(t):=\lambda_j(T)$, $j\in\NN$.
For two quadratic forms $t_1$ and $t_2$, their direct sum
$t_1\oplus t_2$ is the quadratic form $(t_1\oplus t_2)(u_1,u_2):=t_1(u_1)+t_2(u_2)$
defined for $(u_1,u_2)\in \dom(t_1\oplus t_2):=\dom t_1 \times \dom t_2$.
If $T_1$ and $T_2$ are the operators associated with $t_1$ and $t_2$,
then the operator associated with $t_1\oplus t_2$ is $T_1\oplus T_2$.
The form inequality $t_1\ge t_2$ means that $\dom t_1\subseteq \dom t_2$
and $t_1(u)\ge t_2(u)$ for all $u\in \dom t_1$. By the min-max principle,
the form inequality implies the reverse inequality
for the eigenvalue counting functions,
$\cN_E(t_1)\le \cN_E(t_2)$, for all $E\in\RR$.

For further references, let us recall the well-known Sobolev inequality,
see e.g. \cite[Lemma 8]{kuch},
\begin{equation}
	\big|u(0)\big|^2 \leq a \|u'\|_{L^2(0,b)}^2 + \frac2{a}\|u\|_{L^2(0,b)}^2 \text{ for }
	0<a\le b \text{ and } u\in H^1(0,b).
\label{eqn:Sob_ineq}
\end{equation}

For $L > 0$, consider the following two quadratic forms in $L^2(-L,L)$:
\[
	 q_{L,D/N} (u)  := \|u'\|^2_{L^2(-L,L)} - \big|u(0)\big|^2,\quad
	\dom q_{L,D} = H^1_0(-L,L), \quad  \dom q_{L,N} = H^1(-L,L),
\]
which are closed and semibounded from below. Hence, they generate self-adjoint operators
$Q_{L,D}$ and $Q_{L,N}$ in $L^2(-L,L)$. One easily checks that $Q_{L, D/N}$
acts as minus the second derivative in $(-L,0)\cup(0,L)$ on the functions $u$
satisfying $u(0^-)=u(0^+)=:u(0)$ and $u'(0^+)-u'(0^-)=-u(0)$ at the origin
and the Dirichlet/Neumann boundary conditions at the endpoints.
The understanding of the first two eigenvalues of $Q_{L,D/N}$ will be important for our purposes. 
The next proposition is proven in \cite[Propositions 2.4 and 2.5]{EY02}:
\begin{prop}\label{prop:mod1D}
There exist $L_0>0$ and $C_0>0$ such that for $L\ge L_0$ one has
	\begin{gather}
	  \label{1d1}
			-\tfrac{1}{4} -	C_0^{-1} e^{- C_0 L}
			\le
			\lambda_1(Q_{L,N})
			\le
			-\tfrac{1}{4}
			\le
			\lambda_1(Q_{L,D})
			\le
			-\tfrac{1}{4} +	C_0^{-1} e^{- C_0 L},\\
			\label{1d2}
			\lambda_2(Q_{L,D/N}) \ge 0.
	\end{gather}
\end{prop}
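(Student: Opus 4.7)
The plan is to exploit the parity symmetry $x\mapsto -x$ of $Q_{L,D/N}$ and reduce everything to explicit half-line problems. Since the operator commutes with the reflection $u(x)\mapsto u(-x)$, it decomposes orthogonally as $Q_{L,D/N}^{\mathrm{even}}\oplus Q_{L,D/N}^{\mathrm{odd}}$. On odd functions one has $u(0)=0$, so the jump condition at the origin is automatic and $Q_{L,D/N}^{\mathrm{odd}}$ is unitarily equivalent to $-d^2/dx^2$ on $(0,L)$ with a Dirichlet condition at $0$ and a Dirichlet or Neumann condition at $L$; its spectrum is explicit and its smallest eigenvalue is of order $L^{-2}$, in particular strictly positive. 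On even functions, the jump relation $u'(0^+)-u'(0^-)=-u(0)$ becomes the Robin condition $u'(0^+)+\tfrac12 u(0)=0$, so $Q_{L,D/N}^{\mathrm{even}}$ is unitarily equivalent to a mixed Robin/Dirichlet or Robin/Neumann problem on $(0,L)$.

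I would next diagonalize the even sector in the region $\lambda<0$. Writing $\lambda=-\mu^2$ with $\mu>0$ and using the decaying solutions $C\sinh(\mu(L-x))$ and $C\cosh(\mu(L-x))$ that satisfy the right-endpoint condition in the Dirichlet and Neumann case respectively, substitution into the Robin condition at $0$ produces the transcendental equations
\[
\tanh(\mu L)=2\mu \quad (\text{Dirichlet}),\qquad \tanh(\mu L)=\tfrac{1}{2\mu}\quad (\text{Neumann}).
\]
A monotonicity analysis of each side shows that for $L$ large each equation has exactly one positive root, lying strictly below $\tfrac12$ in the Dirichlet case and strictly above $\tfrac12$ in the Neumann case. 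This already delivers $\lambda_1(Q_{L,N})\le -\tfrac14\le \lambda_1(Q_{L,D})$ and, together with the strict positivity of the odd spectrum, shows that the even sector contributes exactly one negative eigenvalue; consequently $\lambda_2(Q_{L,D/N})$ is the minimum of the first odd eigenvalue and the second even eigenvalue, both non-negative.

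For the exponential remainder I would linearize around $\mu=\tfrac12$ using the expansion $1-\tanh(y)=2e^{-2y}\bigl(1+O(e^{-2y})\bigr)$ as $y\to+\infty$. Setting $\mu=\tfrac12\pm\delta$ in the two equations reduces each to a condition of the form $\delta$ of order $e^{-L}$, which yields $|\mu^2-\tfrac14|\le C_0^{-1}e^{-C_0L}$ for suitable constants $L_0,C_0>0$ and completes the bounds on $\lambda_1$. The only step requiring care is this quantitative inversion of $\tanh$ near its asymptote with constants uniform in $L\ge L_0$; the rest of the argument is bookkeeping on the parity decomposition and on the explicit spectra of constant-coefficient second-order operators on intervals.
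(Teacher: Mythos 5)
Your proposal is correct and self-contained. Note that the paper itself does not prove this proposition but cites Exner--Yoshitomi \cite{EY02}; your argument via the parity decomposition, the reduction of the even sector to a Robin problem on $(0,L)$, the transcendental equations $\tanh(\mu L)=2\mu$ (Dirichlet) and $\tanh(\mu L)=\tfrac{1}{2\mu}$ (Neumann) for $\lambda=-\mu^2$, and the asymptotic $1-\tanh(y)=2e^{-2y}(1+O(e^{-2y}))$ is the standard way to establish both the pinching around $-\tfrac14$ and the single-bound-state property, and it matches in spirit what is done in the cited reference. The only points worth stating explicitly when writing this up are (i) that the Dirichlet transcendental equation has a positive root only once $L>2$, which is absorbed into the choice of $L_0$, and (ii) the uniqueness of each root, which follows from concavity of $\tanh$ in the Dirichlet case and strict monotonicity of both sides in opposite directions in the Neumann case; you gesture at both, and both are straightforward.
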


Further, we recall a result about another family of one-dimensional Schr\"{o}dinger operators,
which is a suitable reformulation of Theorem 1 in \cite{KS88}:
\begin{prop}\label{th:KS88}
Let $x_0\in\RR$ and $V:[x_0,+\infty)\to \RR$ be continuous with
$c:=\lim_{x\to +\infty} x^2 V(x) \in \RR$,
then the Schr\"odinger operator
$Q=-\dd^2/\dd x^2-V$ in $L^2(x_0,+\infty)$ with any self-adjoint boundary condition at $x=x_0$ satisfies
	\begin{equation*}
		\cN_{-E}(Q)= \frac{1}{2\pi}\, \sqrt{\Big(c - \frac14\Big)_+}\ |\ln E| + o(\ln E) \text{ as } E \to 0^+,
	\end{equation*}
	where $(x)_+:=\max(x,0)$ is the positive part of $x$.
\end{prop}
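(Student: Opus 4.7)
The plan is to reduce the statement to the spectral analysis of the scale-invariant inverse-square potential on a half-line and then invoke the argument of~\cite{KS88}. Since $V(x)\to 0$, the essential spectrum of $Q$ is $[0,+\infty)$, so only negative discrete eigenvalues contribute to $\cN_{-E}(Q)$. The choice of self-adjoint boundary condition at $x_0$ can change the number of such eigenvalues only by a finite amount and is therefore irrelevant at the level of the logarithmic asymptotics.

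The first step is to localize by Dirichlet-Neumann bracketing at some large $R>x_0$. On $[x_0,R]$ the resulting operator has compact resolvent and contributes $O(1)$ eigenvalues whichever self-adjoint conditions are imposed. On the tail $[R,+\infty)$, the hypothesis $x^2 V(x)\to c$ ensures that for every $\varepsilon>0$ and all sufficiently large $R$, the restriction of $Q$ is sandwiched in the form sense between the two model operators $H_\pm=-\dd^2/\dd x^2-(c\pm\varepsilon)/x^2$ on $L^2(R,+\infty)$ (with Dirichlet or Neumann at $R$). Consequently
\begin{equation*}
\cN_{-E}(H_-)+O(1)\le \cN_{-E}(Q)\le \cN_{-E}(H_+)+O(1),
\end{equation*}
and the problem is reduced to the scale-invariant model $-\dd^2/\dd x^2-\alpha/x^2$.

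For this model, the rescaling $y=x\sqrt{E}$ converts the eigenvalue equation at energy $-E$ into the modified Bessel equation $w''+(\alpha/y^2-1)w=0$ of order $\nu=\sqrt{1/4-\alpha}$. For $\alpha\le 1/4$ the order is real and Hardy's inequality on a half-line shows that there are only finitely many eigenvalues, consistent with $(\alpha-1/4)_+=0$. For $\alpha>1/4$, set $\mu=\sqrt{\alpha-1/4}$; the $L^2$-solution at infinity is proportional to $\sqrt{x}\,K_{i\mu}(x\sqrt{E})$, whose Pr\"ufer phase at the endpoint $y=R\sqrt{E}$ behaves like $\mu|\ln(R\sqrt{E})|+O(1)$ as $E\to 0^+$, since $K_{i\mu}(y)$ is a combination of $y^{i\mu}$ and $y^{-i\mu}$ near the origin. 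A Sturm oscillation argument then equates, up to $O(1)$, the number of negative eigenvalues below $-E$ with the number of full $\pi$-rotations of this phase, yielding
\begin{equation*}
\cN_{-E}(H_\pm)=\frac{1}{2\pi}\sqrt{(c\pm\varepsilon-1/4)_+}\,|\ln E|+o(\ln E).
\end{equation*}
Letting $\varepsilon\to 0^+$ and invoking the continuity of $\alpha\mapsto (\alpha-1/4)_+^{1/2}$ concludes the argument.

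The main obstacle is the Pr\"ufer/rotation-number analysis at the interface between the oscillatory regime $x\ll\sqrt{\alpha/E}$ and the exponentially decaying regime $x\gg\sqrt{\alpha/E}$: the contribution from the classical turning point $x\sim\sqrt{\alpha/E}$ must be controlled uniformly in $E$ in order to produce the remainder $o(\ln E)$ rather than merely $O(\ln E)$. This uniform transition-region estimate is precisely what is carried out in~\cite{KS88}, and the one-dimensional half-line version with arbitrary self-adjoint boundary condition follows from their general statement via the bracketing described above.
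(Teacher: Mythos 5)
The paper offers no proof of this proposition, simply recording it as a reformulation of Theorem~1 of \cite{KS88}; your sketch is correct and fills in the reduction consistently with that citation. The Dirichlet--Neumann bracketing at a large $R$ correctly dismisses both the bounded piece and the choice of boundary condition as $O(1)$ contributions, the form comparison with $-\dd^2/\dd x^2-(c\pm\varepsilon)/x^2$ on the tail together with the Bessel/Pr\"ufer zero count yields the coefficient $\frac{1}{2\pi}\sqrt{(\alpha-\tfrac14)_+}$ after letting $\varepsilon\to 0^+$, and you explicitly defer the uniform turning-point estimate---the step that upgrades $O(\ln E)$ to $o(\ln E)$---to \cite{KS88}, which is exactly the technical content the paper also leaves to that reference.
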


\section{Proof of Theorem~\ref{thm1}}
\label{sec:spec_lay}

\subsection{Cutting out the vertex}
\label{subsec:ftools}

It is a classical result of differential geometry that one can find $R_0>0$ such that
for all $R>R_0$ the map
\begin{equation}
 \label{prop:tub-layer}
\Phi:\Pi_R:=(R,+\infty)\times\TTT\times(-\tfrac 12,\tfrac 12)
\ni (r,s,t)\mapsto r\Gamma(s) + t n(s)\in \RR^3
\end{equation}
is injective, with $\dist\big(\Phi(r,s,t),S\big) = |t|$.
For $R>R_0$, we denote $\Omega_R:=\Phi(\Pi_R)$ and consider
the quadratic forms $a_{R,D/N} (u) = \| \nabla u \|_{L^2(\Omega_R)}^2$ defined
on
\[
\dom a_{R,D} = H_0^1(\Omega_R), \quad \dom a_{R,N} = \big\{ u \in H^1(\Omega_R): u = 0 \text{ on } \partial\Omega_R\cap \partial \Lambda_S\big\}.
\]
\begin{lem} \label{prop:D-N_bracket_lay}
 There exists $C_R>0$ such that for all $E>0$ there holds
	\[
		\cN_{\pi^2 - E}(a_{R,D}) \leq \cN_{\pi^2 - E}(A_S) \leq C_R + \cN_{\pi^2 - E}(a_{R,N}).
	\]
\end{lem}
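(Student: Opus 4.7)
My plan is to bracket the form $a_S$ between a Dirichlet form and a Neumann form on the ``asymptotic piece'' $\Omega_R$, while separately controlling the bounded ``vertex piece'' $\cU_R:=\Lambda_S\setminus\overline{\Omega_R}$. The inequalities will then follow from the min-max principle together with the direct-sum identity $\cN_E(T_1\oplus T_2)=\cN_E(T_1)+\cN_E(T_2)$ recalled in Section~\ref{sec:PI}.

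For the lower bound, I would observe that extension by zero maps $H^1_0(\Omega_R)$ isometrically into $H^1_0(\Lambda_S)=\dom a_S$, with $a_S(\tilde u)=a_{R,D}(u)$ for every such extended $\tilde u$. The min-max principle applied on this test subspace immediately yields $\cN_{\pi^2-E}(a_{R,D})\le \cN_{\pi^2-E}(A_S)$.

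For the upper bound, I would introduce on the vertex piece the auxiliary quadratic form
\[
a_R^{\cU}(u):=\|\nabla u\|^2_{L^2(\cU_R)}, \qquad \dom a_R^{\cU}:=\big\{u\in H^1(\cU_R): u=0 \text{ on } \partial \cU_R\cap \partial \Lambda_S\big\},
\]
which imposes Dirichlet conditions on the outer boundary of $\cU_R$ and free (Neumann) conditions on the interface with $\Omega_R$. Any $u\in H^1_0(\Lambda_S)$ then restricts to admissible test functions with $u|_{\Omega_R}\in\dom a_{R,N}$ and $u|_{\cU_R}\in \dom a_R^{\cU}$, and since $\Omega_R$ and $\cU_R$ partition $\Lambda_S$ up to a null set, $a_S(u) = a_{R,N}(u|_{\Omega_R})+a_R^{\cU}(u|_{\cU_R})$. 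Identifying $L^2(\Lambda_S)=L^2(\Omega_R)\oplus L^2(\cU_R)$, this realises $a_S$ as a form restriction of $a_{R,N}\oplus a_R^{\cU}$, whence $a_S\ge a_{R,N}\oplus a_R^{\cU}$ in the sense of forms, and the min-max principle together with the direct-sum identity yields $\cN_{\pi^2-E}(A_S)\le \cN_{\pi^2-E}(a_{R,N})+\cN_{\pi^2-E}(a_R^{\cU})$.

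It remains to bound the last term uniformly in $E>0$. Since $\cU_R$ is bounded and its boundary is smooth away from the single conical apex at the origin, the embedding $H^1(\cU_R)\hookrightarrow L^2(\cU_R)$ is compact, so the operator associated with $a_R^{\cU}$ has compact resolvent and only finitely many eigenvalues below $\pi^2$. Setting $C_R:=\cN_{\pi^2}(a_R^{\cU})<\infty$ and using the monotonicity of the counting function in its threshold then gives $\cN_{\pi^2-E}(a_R^{\cU})\le C_R$ for every $E>0$, concluding the proof. The only mildly delicate step is verifying the compact Sobolev embedding in the presence of the conical apex; this is handled either by noting that $\cU_R$ satisfies a uniform cone condition, thanks to the dilation-invariance of $\Lambda_S$ near the origin, or by covering $\cU_R$ with finitely many Lipschitz charts, after which the conclusion is standard.
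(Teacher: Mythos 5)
Your proposal is correct and follows essentially the same route as the paper: the same Dirichlet--Neumann bracketing $a'_R\oplus a_{R,N}\le a_S\le a_{R,D}$ (your $a_R^{\cU}$ is precisely the paper's $a'_R$), followed by the observation that the bounded Lipschitz vertex piece $\cU_R$ gives a compact resolvent and hence finitely many eigenvalues below $\pi^2$. Your extra caution about the conical apex is addressed by the standing assumption that $S$ is a Lipschitz hypersurface, which makes $\cU_R$ bounded Lipschitz.
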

\begin{proof}
Denote $U_R:=\Lambda_S\setminus \overline{\Omega}_R$. We have the form inequality $a'_R\oplus a_{R,N} \leq  a_S \leq a_{R,D}$,
where $a'_R(u) = \|\nabla u\|_{L^2(U_R)}^2$ with $\dom a'_R = \big\{ u \in H^1(U_R): u=0 \text{ on } \partial U_R\cap \partial\Lambda_S\big\}$.
Consequently, 
	\[
		\cN_{\pi^2 - E}(a_{R,D}) \leq \cN_{\pi^2 - E}(A_S) \leq \cN_{\pi^2 - E}(a'_R) + \cN_{\pi^2 - E}(a_{R,N}), \quad E>0.
	\]
	As $U_R$ is bounded Lipschtiz, the domain $\dom a_R'$ is compactly embedded into $L^2(U_R)$. Hence,
	the associated operator has compact resolvent, and the result holds for $C_R:=\cN_{\pi^2}(a'_R)<+\infty$.
\end{proof}

\subsection{Reformulation in tubular coordinates}
\label{subsec:tubcoor_fm}
Before going any further, we reformulate the problem using the tubular coordinates $(r,s,t)$
introduced in \eqref{prop:tub-layer}.

\begin{lem}
\label{prop:fatub_lay}
The quadratic forms $a_{R,D/N}$ are unitarily equivalent to the respective quadratic forms $b_{R,D/N}$ in $L^2\big(\Pi_R, (r+t\kappa)\dd r\,\dd s\,\dd t\big)$
defined by
	\begin{align*}
		b_{R,D/N}(u) & =  \iiint_{\Pi_R} \bigg((r + t\kappa)\big(|\partial_r u|^2 + |\partial_t u|^2\big) + \frac1{r + t \kappa}|\partial_s u|^2 \bigg)\dd r \,\dd s \,\dd t,\\
		\dom b_{R,N} & =  \Big\{u\in H^1_\mathrm{loc}(\Pi_R)\mathop{\cap} L^2\big(\Pi_R, (r+t\kappa)\dd r\,\dd s\,\dd t\big):\, b_{R,N}(u)<\infty, \\
		&\qquad u(\cdot,\pm\tfrac12)=0\text{ on }  (R,+\infty)\times \TTT\Big\},\\
		\dom b_{R,D} & =  \Big\{u \in \dom b_{R,N} : u(R,\cdot) = 0 \text{ on }  \TTT\times\big(-\tfrac12,\tfrac12\big)\Big\}.
	\end{align*}
\end{lem}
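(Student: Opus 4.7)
The strategy is the standard change of variables: introduce the pullback $U u := u\circ\Phi$ and argue that it realises a unitary isomorphism from $L^2(\Omega_R)$ onto $L^2\bigl(\Pi_R, (r+t\kappa)\,\dd r\,\dd s\,\dd t\bigr)$ intertwining each pair of forms $a_{R,D/N}$ and $b_{R,D/N}$.

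First I would verify that, for $R$ large enough, $\Phi$ is a $C^3$-diffeomorphism onto $\Omega_R$. Injectivity is exactly the hypothesis $R>R_0$, while enlarging $R_0$ to ensure $R>\tfrac12\kappa_\infty$ yields $r+t\kappa>0$ on $\Pi_R$; combined with the Jacobian computation below this also makes $\Phi$ a local diffeomorphism.

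The key computation uses $\Phi(r,s,t)=r\Gamma(s)+t n(s)$ together with the identity $n'=\kappa\Gamma'$ that defines the geodesic curvature:
\[
\partial_r\Phi=\Gamma,\qquad \partial_s\Phi=(r+t\kappa)\,\Gamma',\qquad \partial_t\Phi=n.
\]
Since $|\Gamma|\equiv 1$ (hence $\Gamma\cdot\Gamma'=0$), $|\Gamma'|\equiv 1$ and $n=\Gamma\times\Gamma'$, the triple $\{\Gamma,\Gamma',n\}$ is orthonormal at every $s\in\TTT$. The pulled-back metric is therefore diagonal, $g=\mathrm{diag}\bigl(1,(r+t\kappa)^2,1\bigr)$, with volume form $(r+t\kappa)\,\dd r\,\dd s\,\dd t$; this yields the $L^2$-unitarity of $U$ at once. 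Applying the chain rule together with $g^{-1}=\mathrm{diag}\bigl(1,(r+t\kappa)^{-2},1\bigr)$ gives the pointwise identity
\[
|\nabla u|^2\circ\Phi=|\partial_r(Uu)|^2+|\partial_t(Uu)|^2+\frac{1}{(r+t\kappa)^2}\,|\partial_s(Uu)|^2,
\]
and multiplication by the volume form reproduces precisely the integrand of $b_{R,D/N}$.

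It remains to match the form domains. Because $\Phi$ is a $C^3$-diffeomorphism with bounded derivatives on every bounded slab $(R,R')\times\TTT\times(-\tfrac12,\tfrac12)$, $U$ maps $H^1_\mathrm{loc}(\Omega_R)$ bijectively onto $H^1_\mathrm{loc}(\Pi_R)$ and preserves traces on smooth portions of the boundary. The identity $\dist(\Phi(r,s,t),S)=|t|$ gives $\Phi\bigl((R,+\infty)\times\TTT\times\{\pm\tfrac12\}\bigr)=\partial\Omega_R\cap\partial\Lambda_S$, so the trace condition in $\dom a_{R,N}$ translates into $(Uu)(\cdot,\cdot,\pm\tfrac12)=0$, and the additional piece $\Phi(\{R\}\times\TTT\times(-\tfrac12,\tfrac12))$ produces $(Uu)(R,\cdot,\cdot)=0$ in the Dirichlet case. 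The main technical subtlety is the unbounded direction $r\to+\infty$, but the weighted measure on $\Pi_R$ is tailored to it: on each bounded slab the change of coordinates is bi-Lipschitz, so requiring $b_{R,N}(Uu)<+\infty$ together with the trace conditions identifies $\dom a_{R,N}$ through $\Phi$ by a routine density argument.
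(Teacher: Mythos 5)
Your proof is correct and follows the same route as the paper: both compute $\partial_r\Phi=\Gamma$, $\partial_s\Phi=(r+t\kappa)\Gamma'$, $\partial_t\Phi=n$, observe that the induced metric is $\mathrm{diag}\bigl(1,(r+t\kappa)^2,1\bigr)$, and push the Dirichlet form through the unitary pullback $u\mapsto u\circ\Phi$ onto the weighted space. Your added remarks on the orthonormality of $\{\Gamma,\Gamma',n\}$ and on matching the form domains via traces are correct elaborations of steps the paper treats implicitly.
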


\begin{proof}
There holds $\partial_r\Phi = \Gamma$, $\partial_s\Phi = (r +t \kappa)\Gamma'$,
$\partial_t \Phi = n$,	and the associated metric tensor $G$ writes as 
	\[
		G = \big(\partial_p\Phi\cdot\partial_k\Phi\big)_{p,k\in\{r,s,t\}} =	\left(\begin{array}{ccc}
									1 & 0 & 0\\
									0 & (r +t \kappa)^2 & 0\\
									0 & 0 & 1
								\end{array}\right).
	\]
Set $g = \sqrt{\det G} = r+t\kappa$ and consider the unitary transform $U$,
\[
	U:L^2(\Omega_R) \to L^2(\Pi_R,g \,\dd r\,\dd s\,\dd t), \quad 	U v= v\circ\Phi.
\]
For $w\in\dom a_{R,D/N}$ we set $u(r,s,t) = (U w)(r,s,t) = w\big(\Phi(r,s,t)\big)$, then $u\in \dom b_{R,D/N}$.
Performing the change of variables, we get the quadratic forms on $L^2(\Pi_R,(r+t\kappa)\dd r\,\dd s \,\dd t)$:
	\[
		a_{R,D/N}(U w) = \iiint_{\Pi_R} \sum_{p,k\in\{r,s,t\}} (G^{-1})_{p,k}\partial_p u\partial_k u \,g \,\dd r \,\dd s \,\dd t
		=b_{R,D/N}(u),
	\]
	which gives the result.
\end{proof}

The next formulation of the problem allows to understand it on a $L^2$-space with the flat metric.

\begin{lem} 
	\label{prop:unit_2fmlay}
The quadratic forms $b_{R,D/N}$ from Lemma~\ref{prop:fatub_lay}
 are unitarily equivalent to the quadratic forms $c_{R,D/N}$ on $L^2(\Pi_R)$ defined as:
			\begin{align*}
						c_{R,N}(v) & = \iiint_{\Pi_{R}} \bigg(|\partial_r v|^2 + \frac1{(r+t\kappa)^2}\Big(|\partial_s v|^2 - \frac{\kappa^2 +1}{4}|v|^2\Big) + |\partial_t v|^2\\
			&\quad +\Big(\frac{t\kappa''}{2(r+t\kappa)^3} -\frac54\frac{(t\kappa')^2}{(r+t\kappa)^4}\Big)|v|^2\bigg) \dd r \,\dd s \,\dd t +  \int_{-\frac 12}^{\frac 12} \int_\TTT \frac{|v(R,s,t)|^2}{2(R+t\kappa)}\dd s\,\dd t,\\
			\dom c_{R,N}  &=  \Big\{v\in H^1_\mathrm{loc}(\Pi_R)\mathop{\cap} L^2(\Pi_R):\, c_{R,N}(v)<\infty,\  v(\cdot,\pm\tfrac12)=0\text{ on }  (R,+\infty)\times \TTT\Big\},\\
			c_{R,D}(v) & = \iiint_{\Pi_{R}} \bigg( |\partial_r v|^2 + \frac1{(r+t\kappa)^2}\Big(|\partial_s v|^2 - \frac{\kappa^2 +1}{4}|v|^2\Big)+ |\partial_t v|^2\\
			&\quad+\Big(\frac{t\kappa''}{2(r+t\kappa)^3} -\frac54\frac{(t\kappa')^2}{(r+t\kappa)^4}\Big)|v|^2 \bigg)\dd r \,\dd s \,\dd t,\\
			\dom c_{R,D} & =  \Big\{v \in \dom c_{R,N} : v(R,\cdot) = 0 \text{ on }  \TTT\times\big(-\tfrac12,\tfrac12\big)\Big\}.
		\end{align*}
\end{lem}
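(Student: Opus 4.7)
The idea is to absorb the weight $g(r,s,t):=r+t\kappa(s)$ by the unitary transformation
\[
V:L^2\bigl(\Pi_R,g\,\dd r\,\dd s\,\dd t\bigr)\to L^2(\Pi_R),\qquad Vu=\sqrt{g}\,u.
\]
Setting $v=Vu$, so $u=g^{-1/2}v$, one computes
\[
\sqrt{g}\,\partial_r u=\partial_r v-\tfrac{1}{2g}v,\quad
\sqrt{g}\,\partial_t u=\partial_t v-\tfrac{\kappa}{2g}v,\quad
\sqrt{g}\,\partial_s u=\partial_s v-\tfrac{t\kappa'}{2g}v,
\]
since $\partial_r g=1$, $\partial_t g=\kappa$ and $\partial_s g=t\kappa'$. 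The key point is that $V$ maps $\dom b_{R,N}$ onto the tentative $\dom c_{R,N}$ (the Dirichlet condition at $t=\pm\tfrac12$ is preserved since $g$ is bounded away from zero, and the Dirichlet condition at $r=R$ for $\dom b_{R,D}$ is likewise preserved).

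Next, I would plug these identities into $b_{R,D/N}(u)$. Each of the three kinetic terms gives a square plus a cross-term plus a lower-order potential. Explicitly,
\[
g|\partial_r u|^2=|\partial_r v|^2-\tfrac{1}{2g}\partial_r|v|^2+\tfrac{|v|^2}{4g^2},\qquad
g|\partial_t u|^2=|\partial_t v|^2-\tfrac{\kappa}{2g}\partial_t|v|^2+\tfrac{\kappa^2|v|^2}{4g^2},
\]
\[
g^{-1}|\partial_s u|^2=\tfrac{|\partial_s v|^2}{g^2}-\tfrac{t\kappa'}{2g^3}\partial_s|v|^2+\tfrac{(t\kappa')^2|v|^2}{4g^4}.
\]
The cross-terms are then integrated by parts:
in the $s$-direction there are no boundary contributions thanks to periodicity on $\TTT$;
in the $t$-direction the boundary terms at $t=\pm\tfrac12$ vanish because $v$ satisfies the Dirichlet condition there;
in the $r$-direction, integration by parts on $(R,+\infty)$ produces no contribution at $+\infty$ (since the forms are finite by assumption) and a surface term at $r=R$ equal to $\tfrac{1}{2(R+t\kappa)}|v(R,s,t)|^2$, which vanishes in the Dirichlet case $v(R,\cdot)=0$.

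The remaining derivatives of $1/g$, $\kappa/g$ and $t\kappa'/g^3$ produce the potential terms: from $\partial_r(1/(2g))=-1/(2g^2)$ and $\partial_t(\kappa/(2g))=-\kappa^2/(2g^2)$ one obtains together with the squared pieces the combined potential $-(1+\kappa^2)/(4g^2)$; from $\partial_s\bigl(t\kappa'/(2g^3)\bigr)=t\kappa''/(2g^3)-3(t\kappa')^2/(2g^4)$ one obtains, together with the squared piece $(t\kappa')^2/(4g^4)$, the combined potential $t\kappa''/(2g^3)-5(t\kappa')^2/(4g^4)$. Summing everything yields precisely the expressions defining $c_{R,N}$ (resp.\ $c_{R,D}$), establishing the claimed unitary equivalence.

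The only delicate issue is justifying the integration by parts on $\Pi_R$ for arbitrary form-domain elements rather than smooth ones. I would handle this by first carrying out the computation for smooth compactly supported $v$ (with the correct Dirichlet trace at $t=\pm\tfrac12$ and, in the Dirichlet case, at $r=R$), then extending to the full form domain by a standard density/closure argument once the two quadratic forms are shown to agree on a common core and to be closed; the finiteness conditions built into $\dom b_{R,N}$ and $\dom c_{R,N}$ make this extension routine.
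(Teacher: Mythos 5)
Your proof is correct and follows essentially the same route as the paper: the multiplication operator $V u = \sqrt{r+t\kappa}\,u$, expansion of each kinetic term into a square, a cross-term and a potential, and integration by parts in $r$, $s$, $t$ producing the boundary term at $r=R$ and the curvature potential. The only addition is your remark on justifying the integration by parts on the full form domain by density; the paper carries out the same formal computation and leaves that standard step implicit.
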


\begin{proof} Consider the unitary transform $V$,
\[
	V: L^2\big(\Pi_R,(r+t\kappa)\dd r\,\dd s\,\dd t\big) \to L^2(\Pi_R),
	\quad (V u)(r,s,t)= u(r,s,t)\sqrt{r+t\kappa(s)}.
\]
Let $j\in\{D,N\}$,  $u\in\dom b_{R,j}$ and $v := V u$, then $u = (r+t\kappa)^{-\frac 12}v$. By definition, one has $v\in\dom c_{R,j}$ and
\begin{align*}
		|\partial_r u|^2 & = \frac1{r+ t\kappa} |\partial_r v|^2 + \frac1{4(r+t\kappa)^3}|v|^2 - \frac1{2(r+t\kappa)^2}\partial_r\big(|v|^2\big),\\
		|\partial_s u|^2 & = \frac1{r+t\kappa}|\partial_s v|^2 + \frac{(t\kappa')^2}{4(r+t\kappa)^3}|v|^2 - \frac{t \kappa'}{2(r + t\kappa)^2}\partial_s\big(|v|^2\big),\\
		|\partial_t u|^2 & = \frac1{r+t\kappa}|\partial_t v|^2 + \frac{\kappa^2}{4(r+t\kappa)^3}|v|^2 - \frac{\kappa}{2(r+t\kappa)^2}\partial_t\big(|v|^2\big).
	\end{align*}
An integration by parts gives
\begin{align*}
	-\int_{R}^{+\infty}\frac1{2(r+t\kappa)}\partial_r\big(|v|^2\big)\dd r & =  \displaystyle\frac{|v(R,s,t)|^2}{2(R+t\kappa)} - \int_{R}^{+\infty}\frac1{2(r+t\kappa)^2}|v|^2\dd r,\\
		-\int_{\TTT}\frac{t\kappa'}{2(r+t\kappa)^3}\partial_s\big(|v|^2\big) \dd s & =   \int_{\TTT} \Big(\frac{t\kappa''}{2(r+t\kappa)^3} - \frac32\frac{(t\kappa')^2}{(r+t\kappa)^4}\Big)|v|^2\dd s,\\
		 -\int_{-\frac 12}^{\frac12} \frac{\kappa}{2(r+t\kappa)}\partial_t\big(|v|^2\big)\dd t & = -\int_{-\frac 12}^{\frac12}\frac{\kappa^2}{2(r+t\kappa)^2}|v|^2\,\dd t,
\end{align*}
and the substitution into  the expression of $b_{R,j}$ gives the sought equality $b_{R,j}(u) = c_{R,j}(V u)$.
\end{proof}

\subsection{Bounds for the quadratic forms}
\label{subsec:bound-fq}
Thanks to Lemmas \ref{prop:D-N_bracket_lay}, \ref{prop:fatub_lay} and \ref{prop:unit_2fmlay},
in order to prove Theorem \ref{thm1} it is sufficient to find suitable bounds
for the quadratic forms $c_{R,D/N}$.
\begin{lem} There exist two constants $B_{R,D/N}>0$ such that $c_{R,D} \leq f_{R,D}$
and $f_{R,N} \leq c_{R,N}$, where the quadratic forms $f_{R,D/N}$ are defined on $\dom f_{R,D/N}=\dom c_{R,D/N}$ by
\begin{align*}
	f_{R,D}(v) &=\iiint_{\Pi_R} \bigg(|\partial_t v|^2 + |\partial_r v|^2 + \frac{1}{(r-\frac{\kappa_\infty}2)^2}\Big(|\partial_s v|^2 - \frac{\kappa^2 + 1}{4}|v|^2\Big)+ \frac{B_{R,D}}{(r-\frac{\kappa_\infty}{2})^3}|v|^2\bigg)\dd r\,\dd s\, \dd t,\\
	f_{R,N}(v) &=\iiint_{\Pi_R} \bigg(|\partial_t v|^2 + |\partial_r v|^2 + \frac1{(r+\frac{\kappa_\infty}2)^2}\Big(|\partial_s v|^2 - \frac{\kappa^2 + 1}{4}|v|^2\Big)- \frac{B_{R,N}}{(r+\frac{\kappa_\infty}{2})^3}|v|^2 \bigg)\dd r\,\dd s\,\dd t.
\end{align*}
\label{prop:bounds_lay}
\end{lem}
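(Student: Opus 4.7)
The strategy is to replace every $s$- and $t$-dependent quantity $(r+t\kappa(s))^{\pm k}$ appearing in the expression of $c_{R,D/N}$ by the $s$- and $t$-independent surrogate $(r\mp \tfrac{\kappa_\infty}{2})^{\pm k}$, using the pointwise bound $|t\kappa(s)|\le \tfrac{\kappa_\infty}{2}$ on $\Pi_R$. I would take $R$ large enough to guarantee $r-\tfrac{\kappa_\infty}{2}>0$ throughout $\Pi_R$, so all surrogate denominators are well-defined and comparable to $r$. The quantitative heart of the argument is the observation that each such replacement generates an error decaying like $r^{-3}$ or faster; this lets me absorb every error into the single potential term $\pm B_{R,D/N}(r\mp\tfrac{\kappa_\infty}{2})^{-3}|v|^2$.

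For the upper bound $c_{R,D}(v)\le f_{R,D}(v)$, I form the difference $f_{R,D}(v)-c_{R,D}(v)$ term by term. The $|\partial_t v|^2$ and $|\partial_r v|^2$ contributions cancel exactly, while the $|\partial_s v|^2$ contribution is nonnegative because $(r-\tfrac{\kappa_\infty}{2})^{-2}\ge (r+t\kappa)^{-2}$. The remaining $|v|^2$-contribution equals
\[
-\frac{\kappa^2+1}{4}\Big(\frac{1}{(r-\kappa_\infty/2)^2}-\frac{1}{(r+t\kappa)^2}\Big)-\frac{t\kappa''}{2(r+t\kappa)^3}+\frac{5(t\kappa')^2}{4(r+t\kappa)^4}+\frac{B_{R,D}}{(r-\kappa_\infty/2)^3}.
\]
The elementary identity
\[
\frac{1}{(r-\kappa_\infty/2)^2}-\frac{1}{(r+t\kappa)^2}=\frac{(2r+t\kappa-\kappa_\infty/2)(t\kappa+\kappa_\infty/2)}{(r-\kappa_\infty/2)^2(r+t\kappa)^2},
\]
together with $0\le t\kappa+\tfrac{\kappa_\infty}{2}\le \kappa_\infty$, shows that the first term above is bounded by $C(r-\tfrac{\kappa_\infty}{2})^{-3}$ uniformly in $(s,t)$; the two remaining error terms are visibly bounded by the same quantity with $C$ depending only on $\kappa_\infty,\kappa'_\infty,\kappa''_\infty$. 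Choosing $B_{R,D}$ larger than the resulting total constant makes the whole $|v|^2$-contribution nonnegative, which yields the desired form inequality.

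The lower bound $f_{R,N}(v)\le c_{R,N}(v)$ is obtained by the mirror-image computation: now $(r+\tfrac{\kappa_\infty}{2})^{-2}\le (r+t\kappa)^{-2}$, so the $|\partial_s v|^2$-contribution to $c_{R,N}-f_{R,N}$ has the correct nonnegative sign, and the boundary term $\int_{-1/2}^{1/2}\int_\TTT \frac{|v(R,s,t)|^2}{2(R+t\kappa)}\,\dd s\,\dd t$ in $c_{R,N}$ is nonnegative and may simply be dropped. The $|v|^2$-error terms are once again $O\!\big((r+\tfrac{\kappa_\infty}{2})^{-3}\big)$, and the negative term $-B_{R,N}(r+\tfrac{\kappa_\infty}{2})^{-3}|v|^2$ in $f_{R,N}$ dominates them for $B_{R,N}$ large enough.

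The one delicate point is verifying the cubic (rather than quadratic) decay of the difference $(r+t\kappa)^{-2}-(r\pm\tfrac{\kappa_\infty}{2})^{-2}$: the factorization displayed above extracts the bounded factor $t\kappa\mp\tfrac{\kappa_\infty}{2}$, giving the extra power of $r^{-1}$ beyond the naive bound $r^{-2}$. Without this gain the contribution $-\tfrac{\kappa^2+1}{4}$ times the difference would be only $O(r^{-2})$ and could not be absorbed into any finite multiple of $(r\mp\tfrac{\kappa_\infty}{2})^{-3}$; this is why the surrogate denominators are chosen specifically at the extremal values $\pm\tfrac{\kappa_\infty}{2}$ of $t\kappa$.
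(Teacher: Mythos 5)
Your proof is correct and follows essentially the same approach as the paper's: replace the $(r+t\kappa)$-dependent coefficients by the surrogates $r\mp\tfrac{\kappa_\infty}{2}$, note that the resulting errors decay like $r^{-3}$ by a difference-of-squares factorization, and absorb them into the $B_{R,D/N}(r\mp\tfrac{\kappa_\infty}{2})^{-3}$ term. The only cosmetic difference is that you factor $(r-\tfrac{\kappa_\infty}{2})^{-2}-(r+t\kappa)^{-2}$ directly, whereas the paper first substitutes $(r+\tfrac{\kappa_\infty}{2})^{-2}$ and then compares the two fixed surrogates.
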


\begin{proof} For $v\in\dom c_{R,D}$ we have
\begin{multline}
	c_{R,D}(v) \leq \iiint_{\Pi_R}\bigg(|\partial_t v|^2 + |\partial_r v|^2 +  \frac1{(r-\frac{\kappa_\infty}2)^2}|\partial_s v|^2\\
	- \frac1{(r+\frac{\kappa_\infty}2)^2}\frac{\kappa^2 + 1}4|v|^2+ \frac{\kappa_\infty''}{4(r - \frac{\kappa_\infty}{2})^3}|v|^2\bigg)\dd r\,\dd s \,\dd t.
\label{eqn:bound1lay}
\end{multline}
An easy computation yields
\begin{equation}
	\frac1{(r-\frac{\kappa_\infty}2)^2} - \frac1{(r+\frac{\kappa_\infty}2)^2} = \frac{2r\kappa_\infty}{(r-\frac{\kappa_\infty}2)^2(r+\frac{\kappa_\infty}2)^2},
\label{eqn:easy_comp}
\end{equation}
which, combined with \eqref{eqn:bound1lay}, gives
\begin{align*}
c_{R,D}(v) &\leq  \iiint_{\Pi_R}\bigg(|\partial_t v|^2 + |\partial_r v|^2 + \frac1{(r-\frac{\kappa_\infty}{2})^2}\Big(|\partial_s v|^2 - \frac{\kappa^2 +1}{4}|v|^2	\Big) +w_D |v|^2\bigg) 	\dd r \,\dd s \,\dd t,\\
w_D(r)&=\frac{\kappa_\infty^2 + 1}{4} \frac{2r\kappa_\infty}{(r-\frac{\kappa_\infty}2)^2(r+\frac{\kappa_\infty}2)^2} + \frac{\kappa_\infty''}{4(r-\frac{\kappa_\infty}{2})^3}.
\end{align*}
The function $r \mapsto (r-\frac 12 \kappa_\infty)^3 w_D(r)$ is continuous on $[R,+\infty)$ and has a finite limit as $r\rightarrow+\infty$.
Hence, there exists $B_{R,D}>0$ such $w_D(r)\leq B_{R,D} (r-\frac 12 \kappa_\infty)^{-3}$ for all $r\in[R,+\infty)$, which gives $c_{R,D} \leq f_{R,D}$.

Now let $v\in\dom c_{R,N}$, then
\begin{multline*}
	c_{R,N}(v) \geq  \iiint_{\Pi_R} \bigg(|\partial_t v|^2 + |\partial_r v|^2 + \frac{1}{(r+\frac{\kappa_\infty}{2})^2}|\partial_s v|^2 - \frac{1}{(r-\frac{\kappa_\infty}{2})^2}\frac{\kappa^2 + 1}{4}|v|^2\\
	- \Big(\frac{\kappa_\infty''}{4(r-\frac{\kappa_\infty}{2})^3} + \frac5{16}\frac{(\kappa_\infty')^2}{(r-\frac{\kappa_\infty}{2})^4}\Big)|v|^2\bigg)\dd r\,\dd s\,\dd t.
\end{multline*}
Taking into account \eqref{eqn:easy_comp}, it rewrites
\begin{align*}
	c_{R,N}(v) &\geq \iiint_{\Pi_R} \bigg(|\partial_t v|^2 + |\partial_r v|^2 + \frac{1}{(r+\frac{\kappa_\infty}{2})^2}\big(|\partial_s v|^2 - \frac{\kappa^2 + 1}{4}|v|^2\big)-w_N |v|^2\bigg) 	\dd r\,\dd s\,\dd t,\\
 w_N(r)&:=\frac{2r\kappa_\infty(\kappa_\infty^2 + 1)}{4(r - \frac{\kappa_\infty}{2})^2(r + \frac{\kappa_\infty}{2})^2} + \frac{\kappa_\infty''}{4(r-\frac{\kappa_\infty}{2})^3} + \frac5{16}\frac{(\kappa_\infty')^2}{(r-\frac{\kappa_\infty}{2})^4}.
\end{align*}
As the function $r \mapsto (r+\frac12 \kappa_\infty)^3 w_N(r)$ is continuous on $[R,+\infty)$ and has a finite limit as $r\rightarrow+\infty$,
there exists $B_{R,N}>0$ such that $w_N(r)\leq B_{R,N} (r+\frac12 \kappa_\infty)^{-3}$ for all $r\in[R,+\infty)$. This concludes the proof.
\end{proof}

Set $\rho_D := R - \frac{\kappa_\infty}{2}$, $\rho_N:= R + \frac{\kappa_\infty}{2}$,
and $\Pi_{R,D/N}:= (\rho_{D/N},+\infty)\times\TTT\times(-\tfrac12,\tfrac 12)$,
and consider the following quadratic forms in $L^2(\Pi_{R,D/N})$:
\begin{align*}
	g_{R,N}(v) &=\iiint_{\Pi_{R,N}}\bigg(|\partial_t v|^2 + |\partial_\rho v|^2 + \frac1{\rho^2}\Big(|\partial_s v|^2 - \frac{\kappa^2 + 1}{4}|v|^2\Big) - \frac{B_{R,N}}{\rho^3}|v|^2 \bigg)\dd\rho\,\dd s\,\,\dd t,\\
	\dom g_{R,N}&=\Big\{ v\in L^2(\Pi_{R,N}):\  \partial_\rho v,\rho^{-1}\partial_s v,\partial_t v\in L^2(\Pi_{R,N}), \  v(\cdot,\pm\tfrac12)=0\text{ on }  (\rho_N,+\infty)\times \TTT\Big\},\\
		g_{R,D}(v) &=\iiint_{\Pi_{R,D}}\bigg(|\partial_t v|^2 + |\partial_\rho v|^2 + \frac1{\rho^2}\Big(|\partial_s v|^2 - \frac{\kappa^2 + 1}{4}|v|^2\Big) + \frac{B_{R,D}}{\rho^3}|v|^2 \bigg)\dd\rho\,\dd s\,\dd t,\\
			\dom g_{R,D}&=\Big\{ v\in L^2(\Pi_{R,D}):\  \partial_\rho v,\rho^{-1}\partial_s v,\partial_t v\in L^2(\Pi_{R,D}), \  v(\cdot,\pm\tfrac12)=0\text{ on }  (\rho_D,+\infty)\times \TTT,\\
			&\qquad\qquad v(\rho_D,\cdot) = 0 \text{ on }  \TTT\times\big(-\tfrac12,\tfrac12\big)\Big\}.
\end{align*}
The quadratic forms $g_{R,D/N}$ are unitarily equivalent to $f_{R,D/N}$ as they simply correspond to the change of variables $\rho=r\mp \frac{\kappa_\infty}{2}$,
and the preceding constructions can be summarized as follows:
\begin{lem}\label{cor:up_loblay}
For any $R>R_0$ there exists $C_R>0$ such that
\[
	\cN_{\pi^2 - E}(g_{R,D}) \leq \cN_{\pi^2 - E}(A_S) \leq C_R + \cN_{\pi^2 - E}(g_{R,N}) \text{ for all } E>0.
\]
\end{lem}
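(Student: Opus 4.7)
The plan is simply to chain together the four preceding lemmas (\ref{prop:D-N_bracket_lay}, \ref{prop:fatub_lay}, \ref{prop:unit_2fmlay}, \ref{prop:bounds_lay}) with the change of variables $\rho = r \mp \kappa_\infty/2$ already mentioned just above the statement. No new analytic input is needed; the whole argument is bookkeeping of inequalities and unitary equivalences.

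First I would invoke Lemma \ref{prop:D-N_bracket_lay} to produce the Dirichlet--Neumann bracketing
\[
\cN_{\pi^2-E}(a_{R,D}) \leq \cN_{\pi^2-E}(A_S) \leq C_R + \cN_{\pi^2-E}(a_{R,N}),
\]
where $C_R>0$ is the constant supplied by that lemma, then use the two unitary equivalences $a_{R,D/N} \simeq b_{R,D/N}$ from Lemma \ref{prop:fatub_lay} and $b_{R,D/N} \simeq c_{R,D/N}$ from Lemma \ref{prop:unit_2fmlay} to rewrite the bracketing as
\[
\cN_{\pi^2-E}(c_{R,D}) \leq \cN_{\pi^2-E}(A_S) \leq C_R + \cN_{\pi^2-E}(c_{R,N}).
\]

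Next I would appeal to Lemma \ref{prop:bounds_lay}. Since $c_{R,D}$ and $f_{R,D}$ share the same form domain and $c_{R,D} \leq f_{R,D}$ pointwise, the min-max principle recalled in Section \ref{sec:PI} reverses the inequality on counting functions, giving $\cN_E(f_{R,D}) \leq \cN_E(c_{R,D})$; analogously, $f_{R,N} \leq c_{R,N}$ yields $\cN_E(c_{R,N}) \leq \cN_E(f_{R,N})$. The substitution $\rho = r \mp \kappa_\infty/2$ then furnishes unitary equivalences $f_{R,D/N} \simeq g_{R,D/N}$ between $L^2(\Pi_R)$ and $L^2(\Pi_{R,D/N})$, so $\cN_E(f_{R,D/N}) = \cN_E(g_{R,D/N})$. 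Substituting back produces the desired double inequality.

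There is no real obstacle in this argument. The only point demanding attention is keeping track of the direction reversal when passing from a form inequality to the inequality between counting functions, and verifying that the domains involved at each step are compatible (which is already guaranteed by Lemma \ref{prop:bounds_lay} declaring $\dom f_{R,D/N} = \dom c_{R,D/N}$, and by the change of variables being a genuine unitary transform).
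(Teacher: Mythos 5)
Your proposal is correct and matches the paper's own reasoning exactly: the paper itself does not give a separate proof of this lemma but rather states it as a summary of Lemmas~\ref{prop:D-N_bracket_lay}, \ref{prop:fatub_lay}, \ref{prop:unit_2fmlay}, \ref{prop:bounds_lay}, together with the remark that $g_{R,D/N}$ and $f_{R,D/N}$ are unitarily equivalent via $\rho = r \mp \kappa_\infty/2$. Your careful tracking of the direction reversal in the min-max step and of the matching form domains is precisely the bookkeeping the paper takes for granted.
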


\subsection{Families of one-dimensional operators}\label{subsec:fam_1D}
We remark that the operators $G_{R,D/N}$ associated with the forms $g_{R,D/N}$ admit a separation of variables.
Indeed, one has the representations
\[
	L^2(\Pi_{D/N}) \simeq L^2(\rho_{D/N},+\infty)\otimes L^2\big(\TTT\times(-\tfrac12,\tfrac12)\big),
	\quad
	L^2\big(\TTT\times(-\tfrac12,\tfrac12)\big)\simeq L^2(\TTT)\otimes L^2(-\tfrac12,\tfrac12),
\]
and the operator $G_{R,D/N}$ commutes with the operators $1\otimes(\cK_S\otimes 1)$ and $1\otimes (1\otimes P)$
with $P$ being the Dirichlet Laplacian in $L^2(-\tfrac12,\tfrac12)$.
Both $\cK_S$ and $P$ have discrete spectra,
and their eigenvalues are $\lambda_m(\cK_S)$ and $\lambda_n(P) = \pi^2n^2$, $m,n\in\NN$.
The decomposition with respect to the associated orthonormal basis of eigenfunctions shows that
the operator $G_{R,D/N}$ is unitarily equivalent to the direct sum
\[
	G_{R,D/N} \simeq \bigoplus_{m,n\in\NN} \big(G_{R,D/N}^{[m]} + \pi^2n^2\big),
\]
where $G_{R,D/N}^{[m]}$ is the one-dimensional operator acting on the Hilbert space $L^2(\rho_{D/N},+\infty)$ and
generated by the quadratic form $g_{R,D/N}^{[m]}$ defined as
\[
	g_{R,D/N}^{[m]}(v) := \int_{\rho_{D/N}} \bigg(|v'|^2 + \Big(\frac{\lambda_m(\cK_S) -\frac 14}{\rho^2} + \frac{C_{R,D/N}}{\rho^3}\Big)|v|^2 \bigg)\dd\rho,
\]
where $C_{R,D} = B_{R,D}$ and $C_{R,N} = - B_{R,N}$, on the domains
\[
	\dom g_{R,D}^{[m]} = H_0^1(\rho_D,+\infty),\quad\dom g_{R,N}^{[m]} = H^1(\rho_N,+\infty).
\]
As the constant $R$ can be chosen arbitrarily large, we assume from now on that
\[
	\frac{\lambda_1(\cK_S) -\frac 14}{\rho^2} + \frac{C_{R,D/N}}{\rho^3} \geq -3\pi^2 \text{ for all } \rho>\rho_{D/N}.
\]
Hence, for all $m\in\NN$ and $n\geq 2$ we have $G_{R,D/N}^{[m]} + \pi^2n^2 \geq \pi^2$,
and for any $E>0$ there holds
\[
	\cN_{\pi^2-E}(G_{R,D/N}) = \sum_{m,n\in\NN}\cN_{\pi^2-E}(G_{R,D/N}^{[m]} + \pi^2n^2) = \sum_{m\in\NN}\cN_{- E} (G_{R,D/N}^{[m]}).
\]
As $\lambda_m(\cK_S)$ tends to $+\infty$ as $m$ goes to $+\infty$, one can find $M\in\NN$ such that
\[
	\lambda_m(\cK_S)\geq 0 \text{ and }\frac{\lambda_m(\cK_S) -\frac14}{\rho^2} + \frac{B_{R,D/N}}{\rho^3} \geq 0 \text{ for all } \rho>\rho_{D/N}
	\text{ and } m\ge M+1.
\]
It follows that $G_{R,D/N}^{[m]}\geq0$ for $m\geq M+1$, therefore
\[
	\cN_{\pi^2-E}(G_{R,D/N}) = \sum_{m=1}^{M}\cN_{- E} (G_{R,D/N}^{[m]}).
\]
The asymptotics of each summand on the right-hand side is described by Proposition~\ref{th:KS88},
\[
\cN_{- E} (G_{R,D/N}^{[m]})=\dfrac{1}{2\pi} \sqrt{\big(-\lambda_m(\cK_S)\big)_+} |\ln E| +o(\ln E),
\quad E\to 0^+,
\]
and we arrive at \eqref{asy1} using Lemma \ref{cor:up_loblay}.

\subsection{Essential spectrum}\label{ess1}
It remains to show Eq.~\eqref{asy1ess} for the essential spectrum. Remark first that the asymptotics \eqref{asy1} shows
already that $\inf \sigma_\mathrm{ess}(A_S)=\pi^2$ thus, it is sufficient
to show that $[\pi^2,+\infty)\subset \sigma(A_S)$.
Remark that, by the above changes of variables, for a smooth function $\varphi\in \dom A_S$
vanishing in $\Lambda_S\setminus \Phi(\Pi_R)$ one has $A_S\varphi=0$ in $\Lambda_S\setminus \Phi(\Pi_R)$
and
\begin{multline}
   \label{vua}
VU A_S\varphi= \bigg[-\dfrac{\partial^2}{\partial r^2}-\dfrac{\partial}{\partial s} \Big( \dfrac{1}{(r+t\kappa)^2}\dfrac{\partial}{\partial s}\Big)
-\dfrac{\partial^2}{\partial t^2}\\
+\Big(\frac{t\kappa''}{2(r+t\kappa)^3} -\frac54\frac{(t\kappa')^2}{(r+t\kappa)^4} -\dfrac{\kappa^2+1}{4(r+t\kappa)^2}\Big)\bigg]VU\varphi
\text{ in } \Pi_R.
\end{multline}

Choose a $C^\infty$ function $\chi:\RR\to \RR$ with $\chi=0$ on $(-\infty, 0)$ and $\chi=1$ on $(1,+\infty)$
and  let $k\ge 0$. For $N>R$, define $\varphi_N\in \dom A_S$ by
\[
\varphi_N=0 \text{ on } \Lambda_S\setminus \Phi(\Pi_N),
\quad
(V U \varphi_N)(r,s,t)=e^{\rmi k r} \cos (\pi t)\chi(N-r)\chi(2N-r) \text{ for } (r,s,t)\in\Pi_R,
\]
then using \eqref{vua} one easily shows that
\[
\lim_{N\to+\infty} \dfrac{\big\|\big(A_S-(\pi^2+k^2)\big)\varphi_N\big\|_{L^2(\Lambda_S)}}{\|\varphi_N\|_{L^2(\Lambda_S)}}
=\lim_{N\to+\infty} \dfrac{\big\|VU A_S\varphi_N-(\pi^2+k^2)VU\varphi_N\big\|_{L^2(\Pi_R)}}{\|VU\varphi_N\|_{L^2(\Pi_R)}}=0,
\]
which means $\pi^2+k^2\in \sigma(A_S)$. As $k\ge 0$ is arbitrary, the result follows.

\section{Proof of Theorem \ref{thm2}: Lower bound}\label{sec:lb-delta}

The aim of this section is to obtain the lower bound
\begin{equation}
  \label{prop:born_inf_delta}
	\liminf_{E \to 0^+}\frac{\cN_{-\frac 14-E}(B_S)}{|\ln E|} \geq k_S.
\end{equation}

\subsection{Change of variables}
\label{subsec:first-tools-lb}

The construction of the tubular coordinates will be done in a slightly different form,
in order to allow a greater freedom in the choice of parameters. Let $R_0>0$,
then one can find $\delta_0\in(0,\kappa_\infty^{-1})$
such that for all $\delta\in(0,\delta_0)$ and $R>R_0$, the map
	\begin{equation}
	   \label{prop:tub_ne}
		\Lambda: \cV_{R,\delta}:=(R,+\infty)\times\TTT\times(-\delta R,\delta R)
		\ni (r,s,t)\mapsto r\Gamma(s) + t n(s)
	\end{equation}
is injective. From now on, we pick $R_0$ and $\delta_0$ satisfying the above conditions. For $R>R_0$ and $\delta\in(0,\delta_0)$ we denote $\Omega_{R,\delta}:=\Lambda(\cV_{R,\delta})$, $S_{R,\delta} := S \cap \Omega_{R,\delta}$
and consider the associated quadratic form in $L^2(\Omega_{R,\delta})$,
\[
	b_{R,\delta}(u) = \iiint_{\Omega_{R,\delta}} |\nabla u|^2\dd x - \iint_{S_{R,\delta}} |u|^2\dd\sigma, \quad \dom b_{R,\delta} = H_0^1(\Omega_{R,\delta}).
\]
As $b_{R,\delta}$ can be viewed as a restriction of $b_S$, we have $\cN_{-\frac 14-E}(b_{R,\delta}) \leq \cN_{-\frac 14-E}(B_S)$ for $E>0$.
Now we are concerned with a lower bound for the eigenvalue counting function for $b_{R,\delta}$
under a suitable choice of $R$ and $\delta$.

Proceeding as in Lemma~\ref{prop:fatub_lay} and in Lemma~\ref{prop:unit_2fmlay} we rewrite
the problem using the tubular coordinates $(r,s,t)$ introduced in \eqref{prop:tub_ne}
and see that the quadratic form $b_{R,\delta}$ is unitarily equivalent to the quadratic form $c_{R,\delta}$ on $L^2(\cV_{R,\delta})$ defined as
\begin{align*}
		c_{R,\delta}(v) &= \iiint_{\cV_{R,\delta}} \bigg( |\partial_r v|^2 +\frac1{(r+t\kappa)^2}\Big(|\partial_s v|^2 - \frac{\kappa^2 + 1}{4}|v|^2\Big) + |\partial_t v|^2\\
		&\quad+ \Big(\frac{t\kappa''}{2(r+t\kappa)^3} - \frac54\frac{(t\kappa')^2}{(r+t\kappa)^4}\Big)|v|^2\bigg)\dd r\,\dd s \,\dd t
						 -\int_{\TTT}\int_R^\infty |v(r,s,0)|^2\dd r\,\dd s,\\
		\dom c_{R,\delta} &=\big\{v\in L^2(\cV_{R,\delta}): \,\partial_r v,\,r^{-1}\partial_s v,\,\partial_t v\in L^2(\cV_{R,\delta}), \quad v=0 \text{ on } \partial\cV_{R,\delta}		\big\}.
\end{align*}
For $v\in\dom c_{R,\delta}$ we have:
\begin{multline*}
\frac1{(r+t\kappa)^2}(|\partial_s v|^2 - \frac{\kappa^2 + 1}{4}|v|^2) \leq \frac{1}{(r-\delta R\kappa_\infty)^2}|\partial_s v|^2 - \frac{1}{(r+\delta R\kappa_\infty)^2}\frac{\kappa^2+1}{4}|v|^2\\
= \frac{1}{(r-\delta R\kappa_\infty)^2}\big(|\partial_s v|^2 -\frac{\kappa^2+1}{4}|v|^2\big) + \Big(\frac{1}{(r-\delta R\kappa_\infty)^2} - \frac{1}{(r+\delta R\kappa_\infty)^2}\Big)\frac{\kappa^2+1}{4}|v|^2.
\end{multline*}
Remark that
\begin{multline*}
	\frac{1}{(r-\delta R\kappa_\infty)^2} - \frac{1}{(r+\delta R\kappa_\infty)^2} = \frac{4\delta rR\kappa_\infty}{(r-\delta R\kappa_\infty)^2(r+\delta R\kappa_\infty)^2}\\ \leq \frac{4\delta R\kappa_\infty}{(r-\delta R\kappa_\infty)^2(r+\delta R\kappa_\infty)} \leq \frac{4\delta R\kappa_\infty}{(r-\delta R\kappa_\infty)^3}\leq \frac{4R\kappa_\infty}{R-\delta R\kappa_\infty} \frac{\delta}{(r-\delta R\kappa_\infty)^2}\\
	\leq \frac{4\kappa_\infty}{1-\delta_0\kappa_\infty} \frac{\delta}{(r-\delta R\kappa_\infty)^2}
\end{multline*}
and
\begin{multline*}
	\frac{t\kappa''}{2(r+t\kappa)^3} - \frac54\frac{(t\kappa')^2}{(r+t\kappa)^4} \leq \frac{t\kappa''}{2(r+t\kappa)^3} \leq \frac{\delta R \kappa_\infty''}{2(r-R\delta\kappa_\infty)^3}
	\\
	\leq \frac{R \kappa_\infty''}{2R(1-\delta\kappa_\infty)} \frac{\delta}{(r-R\delta\kappa_\infty)^2}
\leq \frac{\kappa_\infty''}{2(1-\delta_0\kappa_\infty)}\frac{\delta}{(r-R\delta\kappa_\infty)^2}.
\end{multline*}
Therefore, there exists a constant $C>0$, independent of $(R,\delta)$,
such that $c_{R,\delta}  \leq  f_{R,\delta}$ with
\begin{multline*}
	f_{R,\delta}(v) := \iiint_{\cV_{R,\delta}} |\partial_t v|^2 + |\partial_r v|^2 + \frac{1}{(r-\delta R\kappa_\infty)^2}\Big(|\partial_s v|^2 - \frac{\kappa^2+1 - C\delta}{4}|v|^2\Big)\dd r\,\dd s\,\dd t\\ - \int_{r>R}\int_{s\in\TTT}|v(r,s,0)|^2\dd r\,\dd s, \quad
	 \dom f_{R,\delta}=\dom c_{R,\delta}.
\end{multline*}
Using the change of variable
\[
\rho = \dfrac{r - \delta R\kappa_\infty}{R(1-\delta\kappa_\infty)}
\]
one sees that
the quadratic form $f_{R,\delta}$ is unitarily equivalent to the quadratic form $g_{R,\delta}$,
\begin{multline*}
	g_{R,\delta}(v) := \iiint_{\cU_{R,\delta}} \bigg(|\partial_t v|^2 + \frac{1}{R^2(1-\delta\kappa_\infty)^2}|\partial_\rho v|^2\\
	+ \frac{1}{R^2(1-\delta\kappa_\infty)^2\rho^2}\Big(|\partial_s v|^2 - \frac{\kappa^2+1 -C\delta}{4}|v|^2\Big)
	\bigg)	\dd r\,\dd s\,\dd t - \int_{\TTT}\int_1^{+\infty}|v(\rho,s,0)|^2\dd\rho\,\dd s,
\end{multline*}
with $\cU_{R,\delta} := \big(1,+\infty\big)\times\TTT\times(-\delta R,\delta R)$ and
\[
\dom g_{R,\delta} =\Big\{v\in L^2(\cU_{R,\delta}): \,\partial_\rho v,\,\rho^{-1}\partial_s v,\,\partial_t v\in L^2(\cU_{R,\delta}), \quad v=0 \text{ on } \partial\cU_{R,\delta}		\Big\}.
\]
By construction we have
\begin{equation}
   \label{count1}
 \cN_{-\frac14-E}(B_S)\ge \cN_{-\frac14-E}(g_{R,\delta}), \quad E>0.
\end{equation}

\subsection{Family of one-dimensional operators}\label{subsec:fam-2D-op}

We remark that the operators $G_{R,\delta}$ associated with the form $g_{R,\delta}$ admit a separation of variables.
First one uses the identification
\[
	L^2(\cU_{R,\delta}) \simeq L^2(1,+\infty)\otimes L^2\big(\TTT\times(-\delta R,\delta R)\big),
	\quad L^2\big(\TTT\times(-\delta R,\delta R)\big)\simeq L^2(\TTT)\otimes L^2(-\delta R,\delta R),
	\]
and then remarks that $G_{R,\delta}$ commutes with the operators $1\otimes(\cK_S\otimes 1)$ and $1\otimes (1\otimes Q_{R\delta,D})$,
with $Q_{R\delta,D}$ defined in Subsection~\ref{sec:PI}. Both $\cK_S$ and $Q_{R\delta,D}$
have discrete spectra, and the operator $G_{R,\delta}$ is unitarily equivalent to the direct sum
\[
	G_{R,\delta} \simeq \bigoplus_{m,n\in\NN} \bigg(\dfrac{1}{R^2(1-\delta\kappa_\infty)^2}\,G_{R,\delta}^{[m]} + \lambda_n(Q_{R\delta,D})\bigg),
\]
where $G_{R,\delta}^{[m]}$ is the one-dimensional operator acting on the Hilbert space $L^2(1,+\infty)$ associated with the following
quadratic form $g_{R,\delta}^{[m]}$:
\[
	g_{R,\delta}^{[m]}[v] := \int_{1}^{+\infty} \Big(|v'|^2 + \frac{\lambda_m(\cK_S) -\dfrac{1-C\delta}{4}}{\rho^2}|v|^2\Big)\dd\rho,
	\quad \dom g_{R,\delta}^{[m]} = H_0^1(1,+\infty).
\]
Hence, one can estimate the eigenvalue counting function as follows:
\begin{multline}
\label{eqn:low_boundlay}
	\cN_{-\frac 1 4-E}(G_{R,\delta})=\sum_{m,n\in\NN} \cN_{-\frac 1 4-E}\bigg(\dfrac{1}{R^2(1-\delta\kappa_\infty)^2}\,G_{\delta}^{[m]} + \lambda_n(Q_{R\delta},D)\bigg)\\
		\geq \sum_{m=1}^{M_\delta} \cN_{-\frac 1 4-E} \bigg(\dfrac{1}{R^2(1-\delta\kappa_\infty)^2}\,G_{R,\delta}^{[m]} + \lambda_1(Q_{R\delta,D})\bigg) \\
		=\sum_{m=1}^{M_\delta} \cN_{-\frac14-E - \lambda_1(Q_{R\delta,D})}\bigg(\dfrac{1}{R^2(1-\delta\kappa_\infty)^2}\,G_{R,\delta}^{[m]}\bigg), \quad E>0,
\end{multline}
where
\[
M_\delta := \max \Big\{ m\in\NN: \lambda_{m}(\cK_S) - \dfrac{1-C\delta}{4}< 0\Big\}.
\]
The above constructions are valid for all $R>R_0$. Now, assuming that $E>0$ is sufficiently small,
we choose $R=R_\delta(E) := K_\delta |\ln E|$ with $K_\delta>0$ to be chosen later and set
\[
	\mu_\delta(E) := \Big(\tfrac14 + E + \lambda_1(Q_{R_\delta(E)\delta,D})\Big)R_\delta(E)^2(1-\delta\kappa_\infty)^2.
\]
Thanks to Proposition \ref{prop:mod1D}, for $E$ small enough we have:
\[
	\Big|\tfrac14 + \lambda_{1}(Q_{R_\delta(E)\delta,D})\Big| \leq C_0^{-1} e^{-C_0 K_\delta \delta |\ln E|} = C_0^{-1} E^{C_0 K_\delta \delta}.
\]
Consequently, for $K_\delta$ large enough one has
\[
	\mu_\delta(E) = (1-\delta\kappa_\infty)^2 K^2_\delta|\ln E|^2 E + o(E|\ln E|^2)\to 0^+ \text{ for } E\to 0^+.
\]
With the help of \eqref{eqn:low_boundlay} we get
\[
	\cN_{-\frac14-E}(G_{R_\delta(E),\delta}) \geq \sum_{m=1}^{M_\delta} \cN_{-\mu_\delta(E)}(G_{R_\delta(E),\delta}^{[m]}),
\]
and Proposition \ref{th:KS88} gives
\begin{multline*}
	\liminf_{E\rightarrow 0^+}\frac{\cN_{-1/4-E}(G_{R_\delta(E),\delta})}{|\ln E|}
	\geq
	\sum_{m=1}^{M_\delta} \lim_{E \rightarrow 0^+} \frac{\cN_{-\mu_\delta(E)}(G_{R_\delta(E),\delta}^{[m]})}{|\ln E|}\\
	=
	\sum_{m=1}^{M_\delta} \lim_{E \rightarrow 0^+} \frac{\cN_{-\mu_\delta(E)}(G_{R_\delta(E),\delta}^{[m]})}{\big|\ln \mu_\delta(E)\big|}\cdot \dfrac{\big|\ln\mu_\delta(E)\big|}{|\ln E|}
	=
	\sum_{m=1}^{M_\delta} \lim_{E \rightarrow 0^+} \frac{\cN_{-\mu_\delta(E)}(G_{R_\delta(E),\delta}^{[m]})}{\big|\ln \mu_\delta(E)\big|}\\
	= \frac{1}{2\pi}\sum_{m=1}^{M_\delta} \sqrt{\Big(-\lambda_m(\cK_S) - \tfrac 14 C\delta\Big)_+}.
\end{multline*}
Due to \eqref{count1} we arrive at 
\[
\liminf_{E\rightarrow 0^+}\frac{\cN_{-\frac14-E}(B_S)}{|\ln E|}
\geq 
\frac{1}{2\pi}\sum_{m=1}^{M_\delta} \sqrt{\Big(-\lambda_m(\cK_S) -\tfrac14 C\delta\Big)_+}.
\]
As the inequality is true for any $\delta\in(0,\delta_0)$ and the right-hand side tends to $k_S$
as $\delta\to 0^+$, we arrive at \eqref{prop:born_inf_delta}.

\section{Proof of Theorem~\ref{thm2}: Upper bound}
\label{sec:ub-delta}

In this section we are going to show the inequality
	\begin{equation}
\label{prop:born_sup_delta}
		\limsup_{E\rightarrow 0^+}\frac{\cN_{-\frac 14 -E}(B_S)}{|\ln E|} \leq k_S.
	\end{equation}

\subsection{Change of variables}

Contrary to the preceding cases, we will work on a neighborhood of $S$
which suitable expands at infinity. Namely, for $R>0$ and $\delta>0$ we denote
\begin{gather*}
		\cP_{R,\delta}:= \big\{(r,t)\in\RR^2: \,r>R,\  t\in(-\delta r,\delta r)\big\}
		\equiv \big\{(r,t)\in\RR^2: r>r_{R,\delta}(t)\big\},
		\  r_{R,\delta}(t):= \max\Big(R,\dfrac{|t|}{\delta}\Big),\\
\cV_{R, \delta} := \cP_{R,\delta}\times\TTT,
\end{gather*}
then there exist $R_0>0$ and $\delta_0\in(0, \kappa_\infty^{-1})$
such that for all $\delta\in(0,\delta_0)$ and all $R>R_0$ the map
	\[
		\Lambda: \cV_{R,\delta}\to \RR^3, \quad \Lambda(r,s,t)=r\Gamma(s) + t n(s),
	\]
is injective. We set $\Omega_{R,\delta}:=\Lambda(\cV_{R,\delta})$ and define the following quadratic form in $L^2(\Omega_{R,\delta})$:
\[
a_{R,\delta}(u) := \iiint_{\Omega_{R,\delta}}|\nabla u|^2 \dd x - \iint_{S\mathop{\cap} \Omega_{R,\delta}}|u|^2\dd\sigma,
\quad
\dom a_{R,\delta} = H^1(\Omega_{R,\delta}).
\]

\begin{lem}\label{lem14}
Let $R>R_0$ and $\delta\in(0,\delta_0)$, then there exists $C_{R,\delta}>0$ such that
\[
	\cN_{-\frac 14-E}(B_S) \leq C_{R,\delta} + \cN_{-\frac 1 4-E}(a_{R,\delta}), \quad E>0.
\]
\end{lem}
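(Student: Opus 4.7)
The plan is to establish the claim via Neumann bracketing, cutting $\RR^3$ along $\partial\Omega_{R,\delta}$. Set $U_{R,\delta}:=\RR^3\setminus\overline{\Omega}_{R,\delta}$ and introduce the auxiliary quadratic form
\[
a'_{R,\delta}(u):=\iiint_{U_{R,\delta}}|\nabla u|^2\dd x-\iint_{S\cap U_{R,\delta}}|u|^2\dd\sigma,\qquad \dom a'_{R,\delta}=H^1(U_{R,\delta}).
\]
Any $u\in H^1(\RR^3)=\dom b_S$ restricts to an element of $H^1(\Omega_{R,\delta})\times H^1(U_{R,\delta})$, and the three integrals defining $b_S(u)$ split cleanly between the two pieces; thus $b_S\ge a_{R,\delta}\oplus a'_{R,\delta}$ as quadratic forms. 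The min-max principle then yields
\[
\cN_{-\frac14-E}(B_S)\le \cN_{-\frac14-E}(a_{R,\delta})+\cN_{-\frac14-E}(a'_{R,\delta}),\qquad E>0,
\]
so it suffices to prove $C_{R,\delta}:=\cN_{-\frac14}(a'_{R,\delta})<\infty$.

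The domain $U_{R,\delta}$ is unbounded -- it consists of a bounded vertex region glued to the two unbounded components of the complement of $\Omega_{R,\delta}$ -- so the simple ``bounded Lipschitz hence compact resolvent'' argument used in the Dirichlet case (cf.\ Lemma~\ref{prop:D-N_bracket_lay}) is not available, and this is where the main work of the lemma lies. The crucial structural observation is that the piece of the cone sitting in $U_{R,\delta}$, namely $S\cap U_{R,\delta}$, is a bounded subset of the Lipschitz surface $S$ (morally the part of the cone with $|x|\lesssim R$). Hence the $\delta$-interaction term entering $a'_{R,\delta}$ is supported on a compact piece of $S$.

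From this the conclusion follows by a standard essential-spectrum comparison. The Neumann Laplacian on the Lipschitz domain $U_{R,\delta}$ is non-negative, so its spectrum is contained in $[0,+\infty)$. Because $S\cap U_{R,\delta}$ is compact, the trace map $H^1(U_{R,\delta})\to L^2(S\cap U_{R,\delta})$ is compact (e.g.\ by composing a bounded Sobolev extension to $\RR^3$ with the compact trace from $H^1(\RR^3)$ onto a compact Lipschitz surface). Therefore the surface term in $a'_{R,\delta}$ is a relatively form-compact perturbation of the Neumann Laplacian, and Weyl's theorem in its quadratic-form version gives $\sigma_\mathrm{ess}(A'_{R,\delta})\subseteq[0,+\infty)$. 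Consequently the spectrum of $A'_{R,\delta}$ below $-\tfrac14$ consists of at most finitely many eigenvalues, and taking $C_{R,\delta}$ to be their number completes the proof. I expect the relative form-compactness on the unbounded domain $U_{R,\delta}$ to be the main obstacle; it goes through precisely because the $\delta$-support $S\cap U_{R,\delta}$ is compact.
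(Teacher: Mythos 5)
Your proposal is correct, but it follows a genuinely different route from the paper. The paper does not attempt a compactness argument on the unbounded exterior $U_{R,\delta}$; instead it cuts the exterior a second time along a sphere $\partial B(R_*)$ of sufficiently large radius $R_*\ge R$, producing a \emph{bounded} Lipschitz piece $U_{R,\delta,1}=B(R_*)\setminus\Omega_{R,\delta}$ that contains all of $S\cap U_{R,\delta}$, and an unbounded piece $U_{R,\delta,2}=\RR^3\setminus\overline{B(R_*)\cup\Omega_{R,\delta}}$ that is disjoint from $S$. On $U_{R,\delta,2}$ the form is just $\|\nabla u\|^2\ge 0$, so it contributes nothing below $-\tfrac14$; on $U_{R,\delta,1}$ the $H^1$ space is compactly embedded in $L^2$ because the domain is bounded Lipschitz, whence the associated operator has compact resolvent and finite counting function. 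This keeps the argument elementary and avoids any discussion of essential spectrum on an unbounded domain. Your version keeps $U_{R,\delta}$ in one piece and instead invokes relative form-compactness of the trace plus Weyl's theorem to locate $\sigma_\mathrm{ess}(A'_{R,\delta})\subseteq[0,+\infty)$; this is sound, but it forces you to justify a bounded $H^1$-extension from an unbounded domain whose boundary is conical at infinity (or, equivalently, to localize via a cutoff equal to $1$ near the bounded set $S\cap U_{R,\delta}$ and extend from the bounded Lipschitz truncation, which is essentially re-deriving the paper's second cut under a different name). Both approaches are valid; the paper's extra spherical cut simply moves all the technical weight onto compactness of the Sobolev embedding on a bounded Lipschitz domain, which is more standard than the relative-form-compactness route you sketch.
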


\begin{proof}
Consider the domains
$U_{R,\delta,1}:=B(R_*)\setminus\Omega_{R,\delta}$ and $U_{R,\delta,2}:=\RR^3\setminus \overline{B(R_*)\mathop{\cup}\Omega_{R,\delta}}$,
where $B(R_*)$ is the ball centered at the origin of radius $R_*\ge R$ chosen sufficiently large in such a way that the two sets have a Lipschitz boundary
and that $S\mathop{\cap} U_{R,\delta,2}=\emptyset$. Introduce the quadratic forms
\begin{align*}
			a_{R,\delta,1}(u)&:= \iiint_{U_{R,\delta,1}}|\nabla u|^2\dd x - \iint_{S\mathop{\cap} U_{R,\delta,1}}|u|^2\dd \sigma,& \dom a_{R,\delta,1} = H^1(U_{R,\delta,1}),\\
			a_{R,\delta,2}(u) & :=  \iiint_{U_{R,\delta,2}} |\nabla u|^2\dd x, &\dom a_{R,\delta,2} = H^1(U_{R,\delta,2}).
\end{align*}
Due to the form inequality $b_S\ge a_{R,\delta}\oplus a_{R,\delta,1}\oplus a_{R,\delta,2}$ one has 
\begin{equation*}
	\cN_{-\frac 1 4-E}(b_S) \leq \cN_{-\frac 1 4-E}(a_{R,\delta}) + \cN_{-\frac 1 4-E}(a_{R,\delta,1}) + \cN_{-\frac 1 4-E}(a_{R,\delta,2}),
	\quad E>0.
\end{equation*}
As $a_{R,\delta,2}$ is non-negative, one has $\mathcal{N}_{-\frac 14-E}(a_{R,\delta,2}) = 0$ for $E>0$.
As $U_{R,\delta,1}$ is bounded Lipschitz, the domain of $H^1(U_{R,\delta,1})$ is compactly embedded
in $L^2(U_{R,\delta,1})$ and 
\[
	\cN_{-\frac 14-E}(a_{R,\delta,1}) \leq \cN_{-\frac14}(a_{R,\delta,1}) =: C_{R,\delta}<\infty, \quad E>0. \qedhere
\]
\end{proof}

Introducing the unitary transfrom
\[
U:L^2(\Omega_{R,\delta})\to L^2\big(\cV_{R,\delta},(r+t\kappa)\,\dd r\,\dd s\,\dd t\big),
\quad
U u:=u\circ \Lambda,
\]
and proceeding literally as in Lemma~\ref{prop:fatub_lay} one shows that the quadratic form $b_{R,\delta}:=a_{R,\delta}\circ (U^{-1})$
in $L^2\big(\cV_{R,\delta},(r+t\kappa)\dd r\,\dd s\,\dd t\big)$ is
	\begin{multline*}
		b_{R,\delta}(u)= \iiint_{\cV_{R,\delta}}
		\bigg((r + t\kappa)\big(|\partial_r {u}|^2 + |\partial_t {u}|^2\big) + \frac{1}{r+t\kappa}|\partial_s {u}|^2\bigg)\dd r \, \dd t\,\dd s\\
		- \int_{\TTT}\int_{R}^{+\infty}|{u}(r,s,0)|^2r\,\dd r\,\dd s
	\end{multline*}
with $\dom b_{R,\delta}=\big\{ u\in L^2\big(\cV_{R,\delta},(r+t\kappa)\dd r\,\dd s\,\dd t\big):
\partial_ru,\,r^{-1}\partial_s u,\,\partial_t u\in L^2\big(\cV_{R,\delta},(r+t\kappa)\dd r\,\dd s\,\dd t\big)\big\}$.
Further, using the unitary transform
\[
V: L^2\big(\cV_{R,\delta},(r+t\kappa)\,\dd r\,\dd s\,\dd t\big)\to L^2(\cV_{R,\delta}),
\quad
(Vu)(r,s,t)=\sqrt{r+t\kappa(s)}\,u(r,s,t),
\quad
\]
and a straightforward computation, almost identical
to the one in Lemma~\ref{prop:unit_2fmlay}, one shows that $b_{R,\delta}$ is unitarily equivalent to
the following quadratic form $c_{R,\delta}$,
		\begin{align*}
			c_{R,\delta}(v) 	&=  \iiint_{\cV_{R,\delta}}\bigg(|\partial_r v|^2 + |\partial_t v|^2 + \frac{1}{(r+t\kappa)^2}\Big(|\partial_s v|^2 - \frac{\kappa^2 +1}4|v|^2\Big) \bigg)\dd r\,\dd s\,\dd t\\
 &\quad+ \iiint_{\cV_{R,\delta}}\Big(\frac{t\kappa''}{2(r+t\kappa)^3} - \frac54\frac{(t\kappa')^2}{(r+t\kappa)^4}\Big)|v|^2\dd r\,\dd s\,\dd t
-
\int_{\TTT}\int_{R}^{+\infty}|v(r,s,0)|^2\dd r\,\dd s\\
&\quad  + \iint_{\TTT\times\RR} \frac{|v(r_{R,\delta}(t),s,t)|^2}{2\big(r_{R,\delta}(t)+t\kappa\big)}\dd s\,\dd t
+
\int_{\TTT}\int_{R}^{+\infty}
\frac{\kappa}{2}\Big(\frac{|v(r,s,-\delta r)|^2}{r-\delta r\kappa} - \frac{|v(r,s,\delta r)|^2}{r+2\delta r\kappa}\Big)\dd r \,\dd s,\\
\dom c_{R,\delta} & =\big\{ v\in L^2(\cV_{R,\delta}):
\partial_rv,\,r^{-1}\partial_s v,\,\partial_t v\in L^2(\cV_{R,\delta})\big\}.
		\end{align*}
In what follows we choose $R>R_0$ and $\delta\in(0,\frac 12\delta_0)$, then in view of Lemma~\ref{lem14}
and of the unitary equivalence we have
	\begin{equation}
	   \label{bs1}
	  \cN_{-\frac14-E}(B_S)\le C_{R,2\delta}+ \cN_{-\frac 1 4-E}(c_{R,2\delta}), \quad E>0.
	\end{equation}
In order to continue we need a suitable lower bound for $c_{R,2\delta}$. First we remark that proceeding in the same spirit as in Subsection \ref{subsec:first-tools-lb}, one can find a constant $A>0$ such that
for all $(R,\delta)$ there holds
\begin{multline*}
	c_{R,2\delta}(v) \geq f_{R,\delta}(v) := \iiint_{\cV_{R,2\delta}}
	\bigg(|\partial_r v|^2 + |\partial_t v|^2 + \frac{|\partial_s v|^2  - \dfrac{\kappa^2 +1 + A\delta}4|v|^2}{r^2(1 + 2\delta\kappa_\infty )^2} \bigg)\dd r\,\dd s\,\dd t\\
	- \int_{\TTT}\int_R^{+\infty}|v(r,s,0)|^2\dd r\,\dd s  - \frac{A}{R}\int_{\TTT}
	\int_{R}^{+\infty} \Big(|v(r,s,-2\delta r)|^2 +|v(r,s,2\delta r)|^2\Big) \dd r\,\dd s
\end{multline*}
with $\dom f_{R,\delta}=\dom c_{R,2\delta}$. We have
\begin{multline*}
	\iiint_{\cV_{R,2\delta}}|\partial_t v|^2\dd r\,\dd s\,\dd t - \frac{A}{R}\int_{\TTT}\int_{R}^{+\infty} \Big(|v(r,s,-2\delta r)|^2 +|v(r,s,2\delta r)|^2\Big) \dd r\,\dd s \\
	\begin{aligned}
	&= \int_{\TTT}\int_{R}^{+\infty}\Big(\int_{-2\delta r}^{2\delta r}|\partial_t v(r,s,t)|^2\dd t - \frac{A}{R}\big(|v(r,s,-2\delta r)|^2 +|v(r,s,2\delta r)|^2\big)\Big)\dd r\dd s\\
	&=\iiint_{\cV_{R,\delta}}|\partial_t v|^2 \dd r\,\dd s\,\dd t\\
	&\quad + \int_{\TTT}\int_{R}^{+\infty}\Big(\int_{-2\delta r}^{-\delta r}|\partial_t v(r,s,t)|^2\dd t - \frac{A}{R}|v(r,s,-2\delta r)|^2 \Big)\dd r\,\dd s \\
	&\quad + \int_{\TTT}\int_{R}^{+\infty}\Big(\int_{\delta r}^{2\delta r}|\partial_t v(r,s,t)|^2\dd t - \frac{A}{R}|v(r,s,2\delta r)|^2 \Big)\dd r\,\dd s \\
	&=:I_1+I_2+I_3.
\end{aligned}
\end{multline*}
Using Eq.~\eqref{eqn:Sob_ineq} with  $a=\min(\delta,A^{-1})R$ and $b=\delta R$ we obtain
\begin{multline*}
	I_2 \geq \Big(1-a\frac{A}{R}\Big)\int_R^{+\infty}\int_{\TTT}\int_{-2\delta r}^{-\delta r}|\partial_t v(r,s,t)|^2\dd t\,\dd s \,\dd r-\frac{2A}{aR} \int_{R}^{+\infty}\int_{\TTT}\int_{-2\delta r}^{-\delta r}|v|^2\dd t\,\dd s\,\dd r\\
	\geq -\frac{2A}{aR}\int_{R}^{+\infty}\int_{\TTT}\int_{-2\delta r}^{-\delta r}|v|^2\dd t\,\dd s\,\dd r.
\end{multline*}
The same reasonning yields
\[
I_3 \geq -\frac{2A}{aR}\int_{R}^{+\infty}\int_{\TTT}\int_{\delta r}^{2\delta r}|v|^2\dd t\,\dd s\,\dd r
\label{eqn:est_I2},
\]
and by choosing $R$ sufficiently large we obtain
\begin{equation}
   \label{eq-i23}
I_2+I_3\geq -\frac{2A}{aR} \iiint_{\cV_{R,2\delta}\setminus\cV_{R,\delta}}|v|^2\dd r\,\dd s\,\dd t
\ge
-\frac{1}{3} \iiint_{\cV_{R,2\delta}\setminus\cV_{R,\delta}}|v|^2\dd r\,\dd s\,\dd t,
\quad v\in \dom f_{R,\delta}.
\end{equation}
Introduce the quadratic forms
\begin{align*}
g_{R,\delta}(v) &:= \iiint_{\cV_{R,\delta}}\bigg(
|\partial_r v|^2 + |\partial_t v|^2 + \frac{1}{r^2(1 + 2\delta\kappa_\infty )^2}\Big(|\partial_s v|^2 - \frac{\kappa^2 +1 + A\delta}4|v|^2\Big)
\bigg) \dd r\,\dd s\,\dd t\\
&\quad - \int_{\TTT}\int_{R}^{+\infty}|v(r,s,0)|^2\dd r\,\dd s,\\
\dom  g_{R,\delta} &=\Big\{ v\in L^2(\cV_{R,\delta}):
\partial_rv,\,r^{-1}\partial_s v,\,\partial_t v\in L^2(\cV_{R,\delta})\Big\},\\
g'_{R,\delta}(v) &:=  \iiint_{\cV_{R,2\delta}\setminus \cV_{R,\delta}}
\Big(|\partial_r v|^2 + |\partial_t v|^2 + \frac{1}{r^2(1 + 2\delta\kappa_\infty )^2}\Big(|\partial_s v|^2 - \frac{\kappa^2 +1 + A\delta}4|v|^2\Big) \bigg) \dd r\,\dd s\,\dd t\\
&\quad - \frac{A}{R}\int_{\TTT}\int_{R}^{\infty}\Big(|v(r,s,-2\delta r)|^2+|v(r,s,2\delta r)|^2\Big)\dd r\,\dd s,\\
\dom  g'_{R,\delta} &=\Big\{ v\in L^2(\cV_{R,2\delta}\setminus \cV_{R,\delta}):
\partial_rv,\,r^{-1}\partial_s v,\,\partial_t v\in L^2(\cV_{R,2\delta}\setminus \cV_{R,\delta})\Big\}.
\end{align*}
Due to the form inequality $f_{R,2\delta}\ge g_{R,\delta}\oplus g'_{R,\delta}$ one has
\begin{equation}
  \label{est00}
	 \cN_{-\frac 1 4 - E}(f_{R,\delta}) \leq \cN_{-\frac 1 4 - E}(g_{R,\delta}) + \cN_{-\frac 1 4 - E}(g'_{R,\delta}), \quad E>0,
\end{equation}
and \eqref{eq-i23} gives
\[
	g'_{R,\delta}(v) \geq -\Big(\frac{\kappa_\infty^2 + 1 + A\delta_0}{4R^2} + \frac13\Big)\|v\|_{L^2(\cV_{R,2\delta}\setminus \cV_{R,\delta})}^2,
\]
By increasing the value of $R$ one obtains $\cN_{-\frac 1 4 - E}(g'_{R,\delta})=0$ for $E>0$.
Using \eqref{bs1} we conclude that for any $\delta \in (0,\frac{\delta_0}{2})$ there exists $R>R_0$
and a constant $B_{R,\delta}>0$ such that
\begin{equation}
    \label{bs2}
\cN_{-\frac14-E}(B_S)\le B_{R,\delta}+ \cN_{-\frac 1 4-E}(g_{R,\delta}), \quad E>0.
\end{equation}
Therefore, it is sufficient to study the eigenvalue counting function for $g_{R,\delta}$.

\subsection{Reduction to two dimensional operators}\label{subsec:fam-2D-op-up}
Use the representation $L^2(\cV_{R,\delta}) \simeq L^2(\cP_{R,\delta}) \otimes L^2(\TTT)$,
then the operator $G_{R,\delta}$ associated with $g_{R,\delta}$
commutes with $1\otimes\cK_S$. As $\cK_S$ has discrete spectrum, it follows that
$G_{R,\delta} \simeq \bigoplus_{n\in\NN}G_{R,\delta}^{[n]}$,
where $G_{R,\delta}^{[n]}$ are the self-adjoint operators in $L^2(\cP_{R,\delta})$ associated with the quadratic forms
\[
	g_{R,\delta}^{[n]}(v) := \int_{\cP_{R,\delta}} \bigg(
	|\partial_r v|^2 + |\partial_t v|^2 + \frac{\lambda_n(\cK_S) - \dfrac{1 + A\delta}4}{r^2(1 + 2\delta\kappa_\infty )^2}\,|v|^2
	\bigg)\dd r\,\dd t- \int_{R}^{+\infty}|v(r,0)|^2\dd r
\]
defined on $\dom g_{R,\delta}^{[n]}=H^1(\cP_{R,\delta})$ and
\[
	\cN_{-\frac14-E}(g_{R,\delta}) = \sum_{n\in\NN} \cN_{-\frac 1 4-E}(G_{R,\delta}^{[n]}), \quad E>0.
\]
By Proposition~\ref{prop:mod1D}, one can increase again the value of $R$
to have, with some $C_0>0$,
\[
	g_{R,\delta}^{[n]}(v) \geq \int_{R}^\infty \int_{-\delta r}^{\delta r} \Big(|\partial_r v|^2 + \bigg(\frac{\lambda_{n}(\cK_S) - \dfrac{1 + A \delta}{4}}{r^2(1+2\delta\kappa_\infty)} -\frac14 - C_0^{-1}e^{-C_0\delta r}\Big)|v|^2\bigg)\dd t\,\dd r.
\]
Set
\[
N_\delta := \max \bigg\{n\in\NN:
\lambda_{n}(\cK_S) - \frac{1 + A \delta}{4}\leq 0 
\bigg\},
\]
then by increasing the value of $R$ once more we arrive at
\[
\frac{\lambda_{n}(\cK_S) - \dfrac{1 + A \delta}{4}}{r^2(1+2\delta\kappa_\infty)}  - C_0^{-1}e^{-C_0\delta r}\ge 0,
\quad r> R, \quad n \geq N_\delta+1.
\]
It follows that for all $n\geq N_\delta+1$ one has $G_{R,\delta}^{[n]} \geq -\frac14$,
and then
\begin{equation}
   \label{count3a}
	\cN_{-\frac14-E}(g_{R,\delta}) = \sum_{n=1}^{N_\delta} \cN_{-\frac 1 4-E}(G_{R,\delta}^{[n]}), \quad E>0.
\end{equation}
To study the case $n\leq N_\delta$ we introduce a parameter $L>1$, denote by $m$ the integer part of $\sqrt{L}$,
and set
\begin{gather*}
r_p := R + \dfrac{pL}{m},
\quad
t_p := \delta r_p,
\quad
p \in\{0, \dots,m\},
\quad
r_{m+1} :=+\infty,\\
\Omega_p := \big\{(r,t)\in\RR^2 : r\in(r_p,r_{p+1}),\  t \in (-t_p,t_p)\big\}\subset \cP_{R,\delta},\quad p\in\{0,\dots,m\},\\
\Omega_{m+1} := \cP_{R,\delta} \setminus \overline{\bigcup\nolimits_{p=0}^{m}\Omega_p}.
\end{gather*}
Introduce the following quadratic forms:
\begin{align*}
			h_{p,\delta}^{[n]}(v)&:= \iint_{\Omega_p}\bigg(|\partial_r v|^2 + |\partial_t v|^2 + \frac{\lambda_n(\cK_S) - \dfrac{1 + A\delta}4}{r^2(1 + 2\delta\kappa_\infty )^2}\,|v|^2 \bigg)\dd r\,\dd t
			-\int_{r_p}^{r_{p+1}}|v(r,0)|^2\dd r,\\
			& \quad \dom h_{p,\delta}^{[n]}(v)=H^1(\Omega_p), \quad p\in\{0,\dots,m\},\\
			h_{m+1,\delta}^{[n]}(v)& := \iint_{\Omega_{m+1}}\bigg(|\partial_r v|^2 + |\partial_t v|^2 + \frac{\lambda_n(\cK_S) - \dfrac{1 + A\delta}4}{r^2(1 + 2\delta\kappa_\infty )^2}\,|v|^2 \bigg)\dd r\,\dd t,\\
			&\quad \dom h_{m+1,\delta}^{[n]}=H^1(\Omega_{m+1}),
\end{align*}
then one has the form inequality $g_{R,\delta}^{[n]}\ge \bigoplus_{p=0}^{m+1} h_{p,\delta}^{[n]}$ implying
\begin{equation}
   \label{count01}
		\cN_{-\frac 1 4-E}(g_{R,\delta}^{[n]}) \leq \sum\nolimits_{p=0}^{m+1} \cN_{-\frac 1 4-E}(h_{p,\delta}^{[n]}).
\end{equation}
We remark first that
\[
	h_{m+1,\delta}^{[n]}(v) \geq \frac{\lambda_1(\cK_S) - \dfrac{1+A\delta}{4}}{R^2(1+2\delta\kappa_\infty)^2}\|v\|_{L^2(\Omega_{m+1})}^2,
\]
hence, we can increase the value of $R$ to get
\[
h_{m+1,\delta}^{[n]}(v) \geq -\tfrac14\|v\|_{L^2(\Omega_{m+1})}^2 \text{ for all } n\in\{1,\dots,N_\delta\},
\]
thus giving
\begin{equation}
   \label{count02}
\cN_{-\frac 1 4-E}(h_{m+1,\delta}^{[n]}) = 0 \text{ for } n\in\{1,\dots,N_\delta\} \text{ and } E>0.
\end{equation}

Now assume that $p\in\{0,\dots,m-1\}$. There holds
\begin{multline*}
h_{p,\delta}^{[n]}(v) \geq \iint_{\Omega_p}\Big(|\partial_r v|^2 + |\partial_t v|^2 \Big)\dd r\, \dd t
-
\int_{r_p}^{r_{p+1}} |v(r,0)|^2\dd r - \epsilon_{p,\delta}\|v\|_{L^2(\Omega_p)}^2\\
=  a_{p,\delta}(v) - \epsilon_{p,\delta} \|v\|_{L^2(\Omega_p)}^2,
\end{multline*}
where $a_{p,\delta}$ is the quadratic form of the operator $N_p \otimes 1 +1\otimes Q_{t_p,N}$
with $N_p$ the Neumann Laplacian in $L^2(r_p,r_{p+1})$, the operator $Q_{t_p,N}$ acting in $L^2(-t_p,t_p)$
and defined in Subsection~\ref{sec:PI} and
\[
\epsilon_{p,\delta} := \Big|\frac{\lambda_1(\cK_S) - \tfrac14(1 + A\delta)}{r_p^2(1+\delta\kappa_\infty)^2}\,\Big|.
\]
Thus, for $E>0$ one has
\[
	\cN_{-\frac 14-E}\big(h_{p,\delta}^{[n]}\big) \leq \cN_{-\frac 14}\big(h_{p,\delta}^{[n]}\big)
	\leq 
	\#\Big\{(l,j)\in\NN_0\times\NN: \dfrac{m^2\pi^2l^2}{L^2} \leq -\dfrac{1}{4} + \epsilon_{p,\delta} - \lambda_j(Q_{t_p,N})\Big\}.
\]
We increase the value of $R$ sufficiently to have $\epsilon_{0,\delta} < \frac 1 4$,
then one has $\epsilon_{p,\delta} < \frac 1 4$ for every $p\in\{0,\dots,m-1\}$.
Furthermore, by Proposition \ref{prop:mod1D} we may additionally assume that $R$ is chosen sufficiently
large to have the estimate $\lambda_j(Q_{t_p,N})\ge 0$ for $j\geq 2$ and the inequalities~\eqref{1d1}.
Then, with the new value of $R$ one has
\begin{multline*}
	\#\Big\{(l,j)\in\NN_0\times\NN: \dfrac{m^2\pi^2l^2}{L^2} \leq -\dfrac{1}{4} + \epsilon_{p,\delta} - \lambda_j(Q_{t_p,N})\Big\}\\
	=
	\#\Big\{l\in\NN_0: \dfrac{m^2\pi^2l^2}{L^2} \leq -\dfrac{1}{4} + \epsilon_{p,\delta} - \lambda_1(Q_{t_p,N})\Big\},
\end{multline*}
and 
\begin{multline*}
\cN_{-\frac 14}\big(h_{p,\delta}^{[n]}\big)
	\leq \#\Big\{l\in\NN_0: \dfrac{m^2\pi^2l^2}{L^2} \leq -\dfrac{1}{4} + \epsilon_{p,\delta} - \lambda_1(Q_{t_p,N})\Big\}\\
\leq \#\Big\{l\in\NN_0 : \dfrac{m^2\pi^2l^2}{L^2} \leq \epsilon_{p,\delta} + C_0^{-1} e^{-C_0 t_p}\Big\}
 \leq 1+ \frac{L}{\pi m}\sqrt{\epsilon_{p,\delta} + C_0^{-1} e^{-C_0 t_p}}
 \leq 1+ c'_{R,\delta}\frac{\sqrt{L}}{r_p}
\end{multline*}
with some $c'_{R,\delta}>0$ independent of $L$ and $n$. Summing over all $p\in\{0,\dots,m-1\}$ we get
\begin{multline*}
	\sum_{p=0}^{m-1}\cN_{-\frac 14}\big(h_{p,\delta}^{[n]}\big) \leq m + c'_{R,\delta}\frac{\sqrt{L}}{R} + c'_{R,\delta}\sqrt{L}\sum_{p=1}^{m-1}\frac{1}{R + L\dfrac{p}m}\\
	\leq m + c'_{R,\delta}\frac{\sqrt{L}}{R} + c'_{R,\delta}m\sqrt{L}\int_{0}^{1}\frac{\dd x}{R + Lx}\\
 = m + c'_{R,\delta}\frac{\sqrt{L}}{R} + c'_{R,\delta}\frac{m}{R\sqrt{L}}\ln\big( 1 + \frac{L}{R}\big)
\leq c''_{R,\delta} \sqrt{L},
\end{multline*}
where $c''_{R,\delta}>0$ is independent of $L$ and $n$. 
Thus, it follows from \eqref{count01} that
\begin{equation}
  \label{count4}
\cN_{-\frac 1 4-E}(g_{R,\delta}^{[n]}) \leq \cN_{-\frac 1 4-E}(h_{m,\delta}^{[n]})+ c''_{R,\delta} \sqrt{L}, \quad E>0.
\end{equation}

\subsection{Reduction to one-dimensional operators}
It remains to find a suitable upper bound for the eigenvalue counting function of $h_{m,\delta}^{[n]}$.
The associated operator $H_{m,\delta}^{[n]}$ can be represented as
\[
H_{m,\delta}^{[n]} = W_{R,L,\delta}^{[n]} \otimes 1 + 1\otimes Q_{t_m,N},
\]
where $Q_{t_m,N}$ acts in $L^2(-t_m,t_m)$ as defined in Subsection~\ref{sec:PI}
and $W^{[n]}_{R,L,\delta}$ is the one-dimensional operator in $L^2(R+L,+\infty)$
associated with the quadratic form
\[
 w_{R,L,\delta}^{[n]}(v):= \int_{R+L}^{+\infty} \bigg(|v'|^2 + \frac{\lambda_n(\cK_S)  - \dfrac{1 + A\delta}{4}}{r^2(1+2\delta\kappa_\infty)^2}|v|^2 \bigg)\dd r,\quad
\dom w_{R,L,\delta}^{[n]} = H^1(R+L,+\infty),
\]
and we get
\[
	\cN_{-\frac 1 4-E}(H_{m,\delta}^{[n]}) = \# \Big\{
	(l,j)\in\NN\times\NN: \lambda_l(Q_{t_m,N}) + \lambda_j(W^{[n]}_{R,L,\delta})\leq -\tfrac14 - E
	\Big\}.
\]
Due to the estimate
\[
	w_{R,L,\delta}^{[n]}(v) \geq -\bigg|\frac{\lambda_1(\cK_S) - \frac{1+A\delta}{4}}{R^2(1+2\delta\kappa_\infty)^2}\bigg| \|v\|_{L^2(R+L,+\infty)}^2
\]
and Proposition~\ref{prop:mod1D} one may increase the value of $R$ to obtain $W_{R,L,\delta}^{[n]} \geq -\frac14$
for all $n$ as well as $\lambda_j(Q_{t_m,N})\ge 0$ for  $j\ge 2$. It follows that
\[
	\Big\{l\in\NN: \lambda_l(W_{R,L,\delta}^{[n]}) \leq -\frac 1 4 - E -  \lambda_j(Q_{t_m,N})\Big\} = \emptyset \text{ for } j\ge2, \quad E>0,
\]
which yields
\[
	\cN_{-\frac 14-E}(H_{m,\delta}^{[n]}) = \cN_{-\frac 1 4-E - \lambda_1(Q_{t_m,N})}(W_{R,L,\delta}^{[n]}), \quad E>0.
\]
With the help of the change of variable $\rho = (R + L)^{-1}r$,
one sees that the quadratic form $w_{R,L,\delta}^{[n]}$
is unitarily equivalent to the form $(R + L)^{-2}z_{\delta}^{[n]}$ in $L^2(1,+\infty)$, where
\[
	z_{\delta}^{[n]}(v) := \int_{1}^{+\infty}
	\bigg(
	|v'|^2 + \frac{\lambda_n(\cK_S) -\dfrac{1 + A\delta}{4}}{(1+2\delta\kappa_\infty)^2\rho^2}|v|^2
	\bigg)
	\dd \rho,\quad
	\dom z_{\delta}^{[n]} = H^1(1,+\infty).
\]
Now we set $L = L(E) := K|\ln E|$ for some $K>0$ to be chosen later on, then
for the respective value $m=m(E)$  we have $\cN_{-\frac 14-E}(H_{m,\delta}^{[n]}) = \cN_{-\mu(E)}(z_{\delta}^{[n]})$, $E>0$,
with
\[
	\mu(E) := \big(R + K|\ln E|\big)^2\big(\tfrac14 + E + \lambda_1(Q_{\delta (R+K|\ln E|),N})\big),
\]
and thanks to Proposition \ref{prop:mod1D} one can estimate
\[
	\Big|\lambda_1(Q_{\delta (R+K|\ln E|),N}) + \tfrac14\Big| \leq C_0^{-1} e^{-C_0 \delta(R + K |\ln E|)} = C_0^{-1} e^{-C_0\delta R} E^{C_0\delta K}.
\]
Hence, by choosing a sufficiently large value of $K$ we may assume that
\[
	\mu(E) = K^2E |\ln E|^2 + o(E|\ln E|^2) \text{ as } E\to 0^+
\]
and then use Proposition~\ref{th:KS88} to describe the asymptotics of $\cN_{-\mu(E)}(z_{\delta}^{[n]})$ as $E\to0^+$.
The substitution into \eqref{count4} and then into \eqref{count3a} gives
\begin{multline*}
	\limsup_{E\rightarrow 0^+}\frac{\cN_{-\frac 1 4-E}(g_{R,\delta})}{|\ln E|} =
	\sum_{n=1}^{N_\delta} \limsup_{E\rightarrow0^+}  \frac{\cN_{-\frac14-E}(g_{R,\delta}^{[n]})}{|\ln E|}\\
	\leq \sum_{n=1}^{N_\delta} \limsup_{E\rightarrow0^+}  \frac{\cN_{-\frac14-E}(h_{R,\delta}^{[n]})}{|\ln E|}
	+ N_\delta c''_{R,\delta} \limsup_{E\rightarrow0^+}  \frac{\sqrt{K|\ln E|}}{|\ln E|}	= \sum_{n=1}^{N_\delta} \limsup_{E\rightarrow0^+}  \frac{\cN_{-\frac14-E}(H_{R,\delta}^{[n]})}{|\ln E|}\\
=\sum_{n=1}^{N_\delta} \limsup_{E\rightarrow0^+}  \frac{\cN_{-\mu(E)}(z_{\delta}^{[n]})}{|\ln E|}
\le \sum_{n=1}^{N_\delta} \limsup_{E\rightarrow0^+}  \frac{\cN_{-\mu(E)}(z_{\delta}^{[n]})}{\big|\ln\mu(E)\big|} \cdot \limsup_{E\rightarrow0^+} \dfrac{\big|\ln\mu(E)\big|}{|\ln E|}\\
= \sum_{n=1}^{N_\delta} \limsup_{E\rightarrow0^+}  \frac{\cN_{-\mu(E)}(z_{\delta}^{[n]})}{\big|\ln\mu(E)\big|}
=\frac{1}{2\pi(1 + 2\delta\kappa_\infty)}\sum_{n = 1}^{N_\delta}
\sqrt{\Big(\delta\big(\tfrac{A}{4}-\kappa_\infty\delta \kappa_\infty^2\big) -\lambda_n(\cK_S)\Big)_+}.
\end{multline*}
In view of \eqref{bs2} we get
\[
	\limsup_{E\rightarrow0^+}\frac{\cN_{-\frac 1 4- E}(B_S)}{|\ln E|}
	\leq \frac{1}{2\pi(1 + 2\delta\kappa_\infty)}\sum_{n = 1}^{N_\delta}
\sqrt{\Big(\delta\big(\tfrac{A}{4}-\kappa_\infty-\delta \kappa_\infty^2\big) -\lambda_n(\cK_S)\Big)_+}.
\]
As the inequality is true for any $\delta\in(0,\frac12\delta_0)$ 
and the right-hand side converges to $k_S$ as $\delta\to0^+$,
we arrive at the sought upper-bound \eqref{prop:born_sup_delta}.

\subsection{Essential spectrum}

In order to complete the proof of Theorem~\ref{thm2} we need to show Eq.~\eqref{asy2ess} for the essential spectrum. Equality \eqref{asy2} shows that $\inf\sigma(B_S)=-\frac 14$,
and it is sufficient to show that $[-\frac 14,+\infty)\subset\sigma(B_S)$,
which can be done in the same way as the respective construction
for $A_S$ in Subsection~\ref{ess1}. Namely, one easily checks
that for a function $\varphi\in \dom B_S$ vanishing in $\RR^3\setminus \Lambda(\cV_{R,\delta})$
one has $B_S\varphi=0$ in $\RR^3\setminus \Lambda(\cV_{R,\delta})$ and
\begin{multline}
   \label{vub}
VU B_S\varphi= \bigg[-\dfrac{\partial^2}{\partial r^2}-\dfrac{\partial}{\partial s} \Big( \dfrac{1}{(r+t\kappa)^2}\dfrac{\partial }{\partial s}\Big)
-\dfrac{\partial^2}{\partial t^2}\\
+\Big(\frac{t\kappa''}{2(r+t\kappa)^3} -\frac54\frac{(t\kappa')^2}{(r+t\kappa)^4} -\dfrac{\kappa^2+1}{4(r+t\kappa)^2}\Big)\bigg] VU\varphi
\end{multline}
in $\big\{ (r,s,t)\in \cV_{R,\delta}: t\ne 0\big\}$.
Pick a $C^\infty$ function $\chi:\RR\to \RR$ with $\chi=0$ on $(-\infty, 0)$ and $\chi=1$ on $(1,+\infty)$
and let $k\ge 0$. For $N>R$, define $\varphi_N\in \dom B_S$ through
$\varphi_N=0$ in $\RR^3\setminus \Lambda(\cV_{R,\delta})$ and
\[
(V U \varphi_N)(r,s,t)=e^{\rmi k r} \exp \big(-\tfrac12 |t|\big) \chi(r-N)\chi(2N-r)\chi(t+N\delta)\chi(N\delta-t),
\quad (r,s,t)\in\cV_{R,\delta},
\]
then a short computation with the help of \eqref{vub} shows that
\[
\lim_{N\to+\infty} \dfrac{\big\|\big(B_S-(k^2-\frac 14)\big)\varphi_N\big\|_{L^2(\RR^3)}}{\|\varphi_N\|_{L^2(\RR^3)}}
=\lim_{N\to+\infty} \dfrac{\big\|VU B_S\varphi_N-(k^2-\frac 14)VU\varphi_N\big\|_{L^2(\cV_{R,\delta})}}{\|VU\varphi_N\|_{L^2(\cV_{R,\delta})}}=0,
\]
which means $k^2-\tfrac14\in \sigma(A_S)$. As $k\ge 0$ is arbitrary, the result follows.

\section*{Acknowledgments} Thomas Ourmi\`eres-Bonafos
is supported by a public grant as part of the ``Investissement d'avenir'' project, reference ANR-11-LABX-0056-LMH, LabEx LMH.

\end{document}